\theoremstyle{definition}
\newtheorem*{definition}{Definition}
\theoremstyle{remark}
\newtheorem{remark}{Remark}
\theoremstyle{theorem}
\newtheorem{theorem}{Theorem}
\theoremstyle{corollary}
\newtheorem{corollary}{Corollary}
\theoremstyle{proposition}
\newtheorem{proposition}{Proposition}
\theoremstyle{lemma}
\newtheorem{lemma}{Lemma}
\numberwithin{proposition}{section}
\numberwithin{lemma}{section}
\numberwithin{remark}{section}
\numberwithin{equation}{section}
\begin{document}

%
%
%
%
%
%
%
%
%

\title[Uniqueness of symmetric Navier-Stokes flows]
 {On uniqueness of symmetric Navier-Stokes flows around a body in the plane}

\author[T. Nakatsuka]{Tomoyuki Nakatsuka}

\address{%
Graduate School of Mathematics\\
Nagoya University\\
Nagoya 464-8602\\
Japan}

\email{m09033b@math.nagoya-u.ac.jp}
\subjclass{35Q30, 76D05}

\keywords{Stationary Navier-Stokes equation, Plane exterior domain, Uniqueness, Symmetry}

\date{}
\begin{abstract}
We investigate the uniqueness of symmetric weak solutions to the stationary Navier-Stokes equation in a two-dimensional exterior domain $\Omega$. It is known that, under suitable symmetry condition on the domain and the data, the problem admits at least one symmetric weak solution tending to zero at infinity. Given two symmetric weak solutions $u$ and $v$, we show that if $u$ satisfies the energy inequality $\| \nabla u \|_{L^2 (\Omega)}^2 \le (f,u)$ and $\sup_{x \in \Omega} (|x|+1)|v(x)|$ is sufficiently small, then $u=v$. The proof relies upon a density property for the solenoidal vector field and the Hardy inequality for symmetric functions.
\end{abstract}
\maketitle
\section{Introduction}
Let $\Omega$ be an exterior domain in $\mathbb{R}^2$ with Lipschitz boundary. We study the uniqueness of weak solutions to the stationary Navier-Stokes equation
\begin{equation}
 \label{eq:NS}
 \left\{
  \begin{alignedat}{3}
    - \Delta u + u \cdot \nabla u + \nabla p &= f &\quad&\text{in } \Omega , \\
    \text{div } u &= 0 &\quad&\text{in } \Omega , \\
    u &= 0 &\quad&\text{on } \partial \Omega , \\
    u(x) &\rightarrow 0 &\quad&\text{as } |x| \rightarrow \infty .
  \end{alignedat}
\right.
\end{equation} 
Here $u=(u_1,u_2)$ and $p$ denote, respectively, the unknown velocity and pressure of a viscous incompressible fluid occupying $\Omega$, while $f=(f_1,f_2)$ is a given external force. 

The two-dimensional exterior problem possesses peculiar difficulties. One of the main difficulties stems from the Stokes paradox. It is known that, even if $f=\text{div } F$ with $F=(F_{ij})_{i,j=1,2} \in C_0^\infty (\Omega)$, the Stokes equation
\begin{equation*}
 \left\{
  \begin{alignedat}{3}
    - \Delta u + \nabla p &= f &\quad&\text{in } \Omega , \\
    \text{div } u &= 0 &\quad&\text{in } \Omega , \\
    u &= 0 &\quad&\text{on } \partial \Omega , \\
    u(x) &\rightarrow 0 &\quad&\text{as } |x| \rightarrow \infty ,
  \end{alignedat}
\right.
\end{equation*} 
does not always have a solution. Indeed, it admits a solution only if
\begin{equation*}
\int_{\partial \Omega} (T[u,p]+F) \cdot \nu \,dS =0
\end{equation*} 
where $T[u,p]:=(\partial_i u_j + \partial_j u_i - p \delta_{ij})_{i,j=1,2}$ denotes the stress tensor and $\nu$ is the outer unit normal to $\partial \Omega$, see also \cite{CF,GS,KS2,G1}. Hence the linear approximation is not a useful method in the analysis of the nonlinear problem \eqref{eq:NS} in general. Another difficulty is little information about the asymptotic behavior of Leray's solution in spite of important contributions \cite{GW1,GW2,A}. Leray \cite{L} showed the existence of a weak solution $u$ with finite Dirichlet integral $\int_{\Omega} |\nabla u|^2 \,dx < \infty$ to the problem \eqref{eq:NS}$_{1,2,3}$ with $f=0$, see also \cite{F}. However, it is not known whether his solution of \eqref{eq:NS}$_{1,2,3}$ satisfies \eqref{eq:NS}$_4$ even in a weak sense. This is due to the fact that we cannot control the behavior of the solution $u$ at infinity only from the class $\nabla u \in L^2 (\Omega)$. Owing to these difficulties, the general theory of the existence for \eqref{eq:NS} is not established yet.

By introducing the symmetry, Galdi \cite{G2} and Pileckas-Russo \cite{PR} obtained the existence results concerning \eqref{eq:NS}. We note that the inhomogeneous boundary condition $u=u_*$ on $\partial \Omega$, instead of \eqref{eq:NS}$_3$, is considered in \cite{G2,PR} and \cite{Y2} below, however, we restrict our attention to the problem \eqref{eq:NS}. Assuming that $\Omega$ is symmetric with respect to the coordinate axes $x_1$ and $x_2$:
\begin{equation}
\label{symmetry of domain}
 (x_1,x_2) \in \Omega \ \Rightarrow \ (x_1,-x_2), \ (-x_1,x_2) \in \Omega
\end{equation} 
and $f=(f_1,f_2)$ satisfies the symmetry condition
\begin{equation}
\begin{aligned}
 \label{symmetry of function}
   f_1 (x_1,x_2) = f_1 (x_1,-x_2) = -f_1 (-x_1,x_2), \\
   f_2 (x_1,x_2) = -f_2 (x_1,-x_2) = f_2 (-x_1,x_2),
\end{aligned}
\end{equation}
they proved that the problem \eqref{eq:NS} admits at least one weak solution $u$ with $\nabla u \in L^2 (\Omega)$ and the same symmetry \eqref{symmetry of function}. It was also proved by Galdi \cite{G2} that, due to the symmetry property \eqref{symmetry of function}, the symmetric weak solution $u$ satisfies \eqref{eq:NS}$_4$ in the sense of
\begin{equation}
\label{eq:infinity}
 \lim_{r \rightarrow \infty} \int_0^{2 \pi} |u(r,\theta)|^2 \,d \theta =0,
\end{equation}
see also Russo \cite{R}. Under the stronger symmetry assumption that $\Omega$ satisfies
\begin{equation}
\label{ysymmetry}
  (x_1,x_2) \in \Omega \ \Rightarrow \ (x_1,-x_2), \ (-x_1,x_2), \ (x_2,x_1), \ (-x_2,-x_1) \in \Omega 
\end{equation}
and $f$ satisfies
\begin{equation}
\label{symmetry of function2}
   f_1 (x_2,x_1) = -f_1 (-x_2,-x_1) = f_2 (x_1,x_2)
\end{equation}
as well as \eqref{symmetry of function}, Yamazaki \cite{Y2} showed that if $f$ decays rapidly and is small in a sense, then there exists a weak solution $u$ of \eqref{eq:NS} with $\sup_{x \in \Omega} (|x|+1)|u(x)|$ small and the same symmetry properties \eqref{symmetry of function} and \eqref{symmetry of function2}. To the best of our knowledge, \cite{Y2} is the only literature that provides the existence result of a symmetric weak solution to \eqref{eq:NS} with specific decay rate.

The purpose of this paper is to investigate the uniqueness of weak solutions to \eqref{eq:NS}, which are less symmetric than \eqref{symmetry of function}; to be precise, a weak solution $u=(u_1,u_2)$ is assumed to satisfy the condition that
\begin{equation}
\label{symmetry of function3}
 \begin{split}
  &\mbox{for each } i=1,2 \mbox{ either } u_i (x_1,x_2) = -u_i (x_1,-x_2) \\
  &\mbox{or } u_i (x_1,x_2)= -u_i (-x_1,x_2) \mbox{ holds}.
 \end{split}
\end{equation}
Note that even \eqref{symmetry of function3} is enough to ensure \eqref{eq:infinity}, see \cite{G2,R}. Thus far, there are few results on the uniqueness of weak solutions. Yamazaki \cite{Y2} proved that his solution is unique in the class of weak solutions with $\sup_{x \in \Omega} (|x|+1)|u(x)|$ small as well as symmetry \eqref{symmetry of function} and \eqref{symmetry of function2}, see also \cite{Y1}. We shall show that if $u$ and $v$ are weak solutions of \eqref{eq:NS} with finite Dirichlet integral and symmetry \eqref{symmetry of function3}, $u$ satisfies the energy inequality $\| \nabla u \|_{L^2 (\Omega)}^2 \le (f,u)$ and $\sup_{x \in \Omega} (|x|+1)|v(x)|$ is small, then $u=v$. As an application, our uniqueness theorem, together with the result of Yamazaki \cite{Y2}, describes the asymptotic behavior as $|x| \rightarrow \infty$ of some symmetric weak solutions. Since we consider the homogeneous boundary condition \eqref{eq:NS}$_3$ and in this case it is easy to verify that the symmetric weak solution constructed by Pileckas-Russo \cite{PR} fulfills the energy inequality, we can give information on the asymptotic behavior of their solution such as $|u(x)|=O(|x|^{-1})$ at infinity provided that $f$ satisfies the conditons imposed by \cite{Y2}.

For the proof of our uniqueness theorem, a density property for the solenoidal vector field, together with the Hardy inequality for symmetric functions, plays a crucial role. We shall prove that a function $\psi$ with $\sup_{x \in \Omega} (|x|+1)|\psi (x)| < \infty$ and $\nabla \psi \in L^2 (\Omega)$ can be taken as a test function in the weak form of \eqref{eq:NS}. In two-dimensional exterior domains, we have great difficulty in taking a class of test functions larger than $C_{0,\sigma}^\infty (\Omega)$, while it is relatively easy in $n$-dimensional exterior domains, $n \ge 3$, as we can see in \cite{Mi}. This is due to the lack of information on the class of the nonlinear term $u \cdot \nabla u$. However, thanks to the symmetry property of $u$, the Hardy inequality due to Galdi \cite{G2} (see Lemma \ref{Hardy} below) implies that the term $u \cdot \nabla u$ divided by $|x|+1$ belongs to $L^1 (\Omega)$. With these observations in mind, we shall construct an approximate sequence $\{ \psi_n \}_{n=1}^{\infty} \subset C_{0,\sigma}^\infty (\Omega)$ such that $(|x|+1) \psi_n \rightarrow (|x|+1)\psi$ weakly $*$ in $L^\infty (\Omega)$ as well as $\nabla \psi_n \rightarrow \nabla \psi$ in $L^2 (\Omega)$ as $n \rightarrow \infty$. This density property enables us to take the solution $v$ as a test function in the weak form of \eqref{eq:NS}.

This paper is organized as follows. In Section 2, we shall state the main result on the uniqueness of symmetric weak solutions. After introducing the result of Yamazaki \cite{Y2} precisely, we shall provide a corollary on the asymptotic behavior of a symmetric weak solution. Section 3 is devoted to the proof of the density property mentioned above. The proof of our uniqueness theorem shall be given in Section 4.

\section{Main results}
Before stating our results, we introduce some function spaces. In what follows, we adopt the same symbols for vector and scalar function spaces as long as there is no confusion. For a domain $U \subseteq \mathbb{R}^2$, the space of smooth functions with compact support in $U$ is denoted by $C_0^\infty (U)$ and $C_{0, \sigma}^\infty (U) := \{ \varphi \in C_0^\infty (U) ; \ \text{div } \varphi =0 \ \text{in } U \}$. For $1 \le q \le \infty$, let $L^q (U)$ be the usual Lebesgue space with norm $\| \cdot \|_{q,U}$ and let $W_0^{1,q} (U)$ be the Sobolev space defined by the completion of $C_0^\infty (U)$ in the norm $\| \cdot \|_{1,q,U} := \| \cdot \|_{q,U} + \| \nabla \cdot \|_{q,U}$. We use the abbreviation $\| \cdot \|_q = \| \cdot \|_{q,\Omega}$ for the exterior domain $\Omega$. The homogeneous Sobolev spaces $\dot{H}_0^1 (U)$ and $\dot{H}_{0,\sigma}^1 (U)$ are defined by the completion of $C_0^\infty (U)$ and $C_{0,\sigma}^\infty (U)$, respectively, in the norm $\| \nabla \cdot \|_{2,U}$. If $U$ is bounded, then $\dot{H}_{0,\sigma}^1 (U) = H_{0,\sigma}^1 (U)$ where $H_{0,\sigma}^1 (U) := \overline{C_{0,\sigma}^\infty (U)}^{\| \cdot \|_{1,2,U}}$. The dual space of $\dot{H}_{0,\sigma}^1 (U)$ is denoted by $\dot{H}_{0,\sigma}^{-1} (U)$. By $(\cdot , \cdot)$ we denote various duality pairings.

We also need some symmetry. We say that $\Omega$ is a symmetric exterior domain if $\Omega$ satisfies the condition \eqref{symmetry of domain}. The subspace of $\dot{H}_{0,\sigma}^1 (\Omega)$ consisting of functions with the symmetry property \eqref{symmetry of function3} is denoted by $\dot{H}_{0,\sigma}^{1,S} (\Omega)$.

Our definition of a symmetric weak solution to \eqref{eq:NS} is as follows.

\begin{definition}
Let $\Omega$ be a symmetric exterior domain. Given $f \in \dot{H}_{0,\sigma}^{-1} (\Omega)$, a function $u \in \dot{H}_{0,\sigma}^{1,S} (\Omega)$ is called a symmetric weak solution of \eqref{eq:NS} if $u$ satisfies
\begin{equation}
\label{definition}
  (\nabla u, \nabla \varphi) + (u \cdot \nabla u, \varphi) = (f,\varphi) \quad \text{for all } \varphi \in C_{0,\sigma}^\infty (\Omega).
\end{equation}
\end{definition}

\begin{remark}
If $u \in \dot{H}_{0,\sigma}^{1,S} (\Omega)$, then $u$ automatically satisfies \eqref{eq:infinity}, see Galdi \cite[Lemma 3.2]{G2} and Russo \cite[Theorem 5]{R}.
\end{remark}

\begin{remark}
\label{def}
Our definition of a symmetric weak solution is different from that in \cite{G2,PR}. Let $C_{0,\sigma}^{\infty,S} (\Omega) := \{ \varphi \in C_{0,\sigma}^\infty (\Omega) ;\ \varphi \ \text{satisfies } \eqref{symmetry of function} \}$ and define $H^S$ by the completion of $C_{0,\sigma}^{\infty,S} (\Omega)$ in the norm $\| \nabla \cdot \|_2$. In their definition, for $f \in (H^S)^*$, a function $u \in H^S$ is a symmetric weak solution of \eqref{eq:NS} if $u$ satisfies the weak form \eqref{definition} for all $\varphi \in C_{0,\sigma}^{\infty,S} (\Omega)$. Here $(H^S)^*$ denotes the dual space of $H^S$. Notice that $H^S = \{ u \in \dot{H}_{0,\sigma}^1 (\Omega) ;\ u \ \text{satisfies } \eqref{symmetry of function} \} \subset \dot{H}_{0,\sigma}^{1,S} (\Omega)$ and $\dot{H}_{0,\sigma}^{-1} (\Omega) \subset (H^S)^*$. We can verify that their solutions satisfy \eqref{definition} for all $\varphi \in C_{0,\sigma}^\infty (\Omega)$. Indeed, for  $\varphi \in C_{0,\sigma}^\infty (\Omega)$ we set the function $\varphi^S = (\varphi_1^S, \varphi_2^S) \in C_{0,\sigma}^{\infty,S} (\Omega)$ by
\begin{equation*}
 \begin{aligned}
  \varphi_1^S (x_1,x_2) :&= \frac{1}{4} (\varphi_1 (x_1,x_2)+ \varphi_1 (x_1,-x_2)- \varphi_1 (-x_1,x_2)- \varphi_1 (-x_1,-x_2)), \\
   \varphi_2^S (x_1,x_2) :&= \frac{1}{4} (\varphi_2 (x_1,x_2)- \varphi_2 (x_1,-x_2)+ \varphi_2 (-x_1,x_2)- \varphi_2 (-x_1,-x_2)).
 \end{aligned}
\end{equation*}
Suppose $f \in \dot{H}_{0,\sigma}^{-1} (\Omega)$ with the symmetry property \eqref{symmetry of function} and $u \in H^S$ is a symmetric weak solution in the sense of \cite{G2,PR}. By the symmetry property, direct calculations yield $(\nabla u, \nabla (\varphi - \varphi^S)) = (u \cdot \nabla u, \varphi - \varphi^S) = (f,\varphi - \varphi^S)=0$. Hence
\begin{equation*}
  (\nabla u, \nabla \varphi) + (u \cdot \nabla u, \varphi) = (\nabla u, \nabla \varphi^S) + (u \cdot \nabla u, \varphi^S) = (f,\varphi^S) = (f,\varphi)
\end{equation*}
for all $\varphi \in C_{0,\sigma}^\infty (\Omega)$.
\end{remark}

Now we are in a position to state our main result on the uniqueness of symmetric weak solutions.

\begin{theorem}
\label{theorem}
Let $\Omega$ be a symmetric exterior domain with Lipschitz boundary. Suppose $u,v \in \dot{H}_{0,\sigma}^{1,S} (\Omega)$, having the same symmetry property in \eqref{symmetry of function3}, are symmetric weak solutions of \eqref{eq:NS}. There exists a constant $\delta = \delta (\Omega)$ such that if $u$ satisfies the energy inequality
\begin{equation*}
 \| \nabla u \|_2^2 \le (f,u)
\end{equation*}
and
\begin{equation*}
 \sup_{x \in \Omega} (|x|+1)|v(x)| \le \delta,
\end{equation*}
then $u=v$.
\end{theorem}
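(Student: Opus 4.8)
The strategy is the classical energy/uniqueness argument for the Navier--Stokes equations, but the two-dimensional exterior setting forces us to be careful about which test functions are admissible. The plan is to use $v$ itself as a test function in the weak form for $u$, and $u$ as a test function in the weak form for $v$, then subtract and combine with the energy inequality for $u$. Set $w = u - v$. Formally, testing \eqref{definition} for $u$ with $v$ and for $v$ with $u$, using $\mathrm{div}\,u = \mathrm{div}\,v = 0$ and the antisymmetry of the trilinear form $(a\cdot\nabla b, b) = 0$ for solenoidal $a$, one obtains after rearrangement an inequality of the shape
\begin{equation*}
 \|\nabla w\|_2^2 \le (w\cdot\nabla w, v) = -(w\cdot\nabla v, w),
\end{equation*}
and then one bounds the right-hand side by $\sup_{x\in\Omega}(|x|+1)|v(x)|$ times $\| \,|x|^{-1}\,$ (roughly)$\, w\|_2 \,\|\nabla w\|_2$, invoking the Hardy inequality for symmetric functions (Lemma~\ref{Hardy}) to absorb the weighted $L^2$ norm of $w$ into $\|\nabla w\|_2$. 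Choosing $\delta$ smaller than the reciprocal of the product of the Hardy constant and the relevant absolute constants gives $\|\nabla w\|_2 = 0$, hence $u = v$.

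The first genuine obstacle is that $v \notin C_{0,\sigma}^\infty(\Omega)$, so it is not a priori a legitimate test function in \eqref{definition}; this is exactly where the density property announced in the introduction (to be proved in Section~3) enters. I would invoke that result to take any $\psi \in \dot H_{0,\sigma}^{1,S}(\Omega)$ with $\sup_{x\in\Omega}(|x|+1)|\psi(x)| < \infty$ as a test function, the point being that along the approximating sequence $\psi_n \in C_{0,\sigma}^\infty(\Omega)$ we have $\nabla\psi_n \to \nabla\psi$ in $L^2$ (handling the linear term and, via Hardy, the force term) while $(|x|+1)\psi_n \to (|x|+1)\psi$ weak-$*$ in $L^\infty$, which together with the fact that $(|x|+1)^{-1} u\cdot\nabla u \in L^1(\Omega)$ (again by the symmetry-Hardy inequality) lets us pass to the limit in the nonlinear term $(u\cdot\nabla u, \psi_n)$. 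Since $v$ satisfies $\sup_{x\in\Omega}(|x|+1)|v(x)| \le \delta < \infty$ and lies in $\dot H_{0,\sigma}^{1,S}(\Omega)$, it is admissible; for $u$ as a test function in the equation for $v$, one uses instead that $v$ is the "small" solution so that $v\cdot\nabla v$ has good decay, making $u$ admissible there by the analogous density argument (or, since $u$ only needs pairing against $v\cdot\nabla v \in L^{q}$ for suitable $q$, a more direct approximation).

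A second point requiring care is the bookkeeping in the subtraction: one must check that every trilinear term appearing —$(u\cdot\nabla u, v)$, $(v\cdot\nabla v, u)$, $(u\cdot\nabla v, u)$, etc.— is well defined, which is where both the symmetry (ensuring Hardy's inequality $\| w/(|x|+1)\|_2 \le C\|\nabla w\|_2$ applies to $w = u-v$, since the symmetry class \eqref{symmetry of function3} is preserved under differences of solutions with the \emph{same} symmetry pattern) and the decay of $v$ are used in tandem. The energy inequality $\|\nabla u\|_2^2 \le (f,u)$ is needed precisely because we only have an inequality, not an identity, for $u$ (Leray-type solutions need not satisfy the energy equality in this setting), so the subtraction yields an inequality in the right direction rather than an identity; I would combine $(f,u) = (\nabla u, \nabla u) + \ldots$ obtained by testing $u$'s equation against $u$ — legitimate since $u \in \dot H_{0,\sigma}^{1,S}(\Omega)$, though this too needs the density property unless one argues directly — with $(f, v)$ and $(f, u)$ from testing against $v$. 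The main obstacle I anticipate is not the final estimate, which is routine once the pieces are in place, but rigorously justifying the use of $v$ (and $u$) as test functions: verifying the convergence of the nonlinear term under only weak-$*$ $L^\infty$ control on $(|x|+1)\psi_n$ paired with $L^1$ control on $(|x|+1)^{-1}u\cdot\nabla u$ is the delicate step, and it rests entirely on the symmetry-enhanced Hardy inequality.
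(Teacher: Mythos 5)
Your plan is correct and is essentially the paper's own argument: Proposition \ref{exterior} (the density property) makes $v$ an admissible test function in the weak form for $u$, the symmetric Hardy inequality of Lemma \ref{Hardy} controls the nonlinear pairings and justifies the identities $(w\cdot\nabla v,v)=0$ and $\|\nabla v\|_2^2=(f,v)$, and these combined with the assumed energy inequality for $u$ give $\|\nabla w\|_2^2 \le (w\cdot\nabla w, v) \le C\delta\,\|\nabla w\|_2^2$, forcing $w=0$ for small $\delta$. The one caveat is that the estimate you describe applies to $(w\cdot\nabla w, v)$ directly, namely $|(w\cdot\nabla w,v)| \le \sup_{x\in\Omega}(|x|+1)|v(x)|\,\bigl\| w/(|x|+1)\bigr\|_2\,\|\nabla w\|_2$ as in the paper, and not to its integrated-by-parts form $-(w\cdot\nabla v, w)$, whose natural bound $\|\nabla v\|_2\,\|w\|_4^2$ would not exploit the decay of $v$.
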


\begin{remark}
The existence of a symmetric weak solution was proved by Galdi \cite{G2} and Pileckas-Russo \cite{PR}. It was shown in \cite{PR} that for every $f \in \dot{H}_{0,\sigma}^{-1} (\Omega)$ satisfying \eqref{symmetry of function} there exists a symmetric weak solution $u \in H^S$ of (\ref{eq:NS}), see also Remark \ref{def}. Since we consider the boundary condition $u=0$ on $\partial \Omega$, we can easily verify that the solution constructed by Pileckas-Russo \cite{PR} satisfies the energy inequality. Yamazaki \cite{Y2} obtained a symmetric weak solution $v$ with $\sup_{x \in \Omega} (|x|+1)|v(x)|$ small. For the details of \cite{Y2}, see below.
\end{remark}

\begin{remark}
\label{remark2}
The assumption on the symmetry of weak solutions is closely related to the decay rate of $v$. If $v$ decays faster, that is, $\sup_{x \in \Omega} (|x| +1)^\alpha |v(x)|$ is sufficiently small for some $\alpha>1$, then we can prove the uniqueness without symmetry, see Remark \ref{remark3}.
\end{remark}

\begin{remark}
Our uniqueness theorem is also valid even if we replace $\Omega$ by $\mathbb{R}^2$. This is based on the fact that the Hardy inequality for symmetric functions introduced in Lemma \ref{Hardy} below holds even in $\mathbb{R}^2$. For the existence of a symmetric weak solution $v$ with $\sup_{x \in \mathbb{R}^2} (|x|+1)|v(x)|$ small, see Yamazaki \cite{Y1}.
\end{remark}

\begin{remark}
The same type of uniqueness theorems without symmetry in $n$-dimensional exterior domains, $n \ge 3$, are well known \cite{G1,Mi,KY}.
\end{remark}

We apply our result to deduce the asymptotic behavior of a symmetric weak solution. To this end, we need the following existence result due to Yamazaki \cite{Y2}. Let $\Omega$ be an exterior domain with $C^{2+\mu}$-boundary, $\mu >0$, satisfying \eqref{ysymmetry}. Take $R>0$ so that $\partial \Omega \subset B(0,R):= \{ x \in \mathbb{R}^2 ; |x|<R \}$ and let $\Omega_R := \Omega \cap B(0,R)$. For $q \in [1,\infty)$ and $\alpha >0$, we denote by $\chi (q,\alpha)$ the set of locally integrable functions $f$ on $\Omega$ such that
\begin{equation*}
  \| f \|_{\chi (q,\alpha)} := \frac{R^{\alpha -2/q}}{{\pi}^{1/q}} \| f \|_{q,\Omega_R} + \sup_{r \ge R} \frac{r^{\alpha -2/q}}{{\pi}^{1/q}} \| f \|_{L^q (r \le |x| \le 4r)} < \infty.
\end{equation*}
Then $\chi (q,\alpha)$ is a Banach space and is independent of the choice of $R$ up to equivalent norms. Note also that $\chi (q,\alpha) \subset \chi (s,\alpha)$ if $1 \le s < q < \infty$ and that $\chi (q,\alpha) \subset L^q (\Omega)$ if $\alpha > 2/q$. We especially need the case $q>2$ and $\alpha +1 \in [2,3]$. In such a case, $\chi (q,\alpha +1) \subset L^r (\Omega)$ for all $r \in (1,q]$ and, furthermore, the space $\chi (q,\alpha +1)$ describes the decay of $L^q$-norm in detail.

Assume that the external force $f=(f_1,f_2)$ is represented as
\begin{equation}
\label{form}
 \begin{aligned}
  f_1(x) &= \frac{\partial F}{\partial x_1}(x) + \frac{\partial G}{\partial x_2}(x) + \frac{\partial H}{\partial x_2}(x) , \\ 
  f_2(x) &= -\frac{\partial F}{\partial x_2}(x) + \frac{\partial G}{\partial x_1}(x) - \frac{\partial H}{\partial x_1}(x)
 \end{aligned}
\end{equation}
with scalar-valued functions $F(x)$, $G(x)$ and $H(x)$ satisfying the symmetry conditions
\begin{equation}
 \label{F}
 \left\{
  \begin{alignedat}{3}
    &F(x_1,x_2)=F(x_1,-x_2)=F(-x_1,x_2), \\
    &F(x_1,x_2)=-F(x_2,x_1)=-F(-x_2,-x_1),
  \end{alignedat}
\right.
\end{equation} 
\begin{equation}
\label{G}
 \left\{
  \begin{alignedat}{3}
    &G(x_1,x_2)=-G(x_1,-x_2)=-G(-x_1,x_2), \\
    &G(x_1,x_2)=G(x_2,x_1)=G(-x_2,-x_1), 
  \end{alignedat}
\right.
\end{equation} 
\begin{equation}
\label{H}
 \left\{
  \begin{alignedat}{3}
    &H(x_1,x_2)=-H(x_1,-x_2)=-H(-x_1,x_2), \\
    &H(x_1,x_2)=-H(x_2,x_1)=-H(-x_2,-x_1).
  \end{alignedat}
\right.
\end{equation}
Notice that $f$ satisfies the symmetry properties \eqref{symmetry of function} and \eqref{symmetry of function2}. Yamazaki \cite{Y2} proved that for $q>2$ and $\alpha \in [1,2]$ there exists a constant $\beta =\beta (\Omega,q,\alpha)$ such that if
\begin{equation}
\label{smallness}
  \| F \|_{\chi (q,\alpha+1)} +  \| G \|_{\chi (q,\alpha+1)} +  \| H \|_{\chi (q,\alpha+1)} \le \beta ,
\end{equation}
then the problem \eqref{eq:NS} admits a unique solution $u$ with $\nabla u \in \chi (q,\alpha+1)$ and $\sup_{x \in \Omega} (|x|+1)^\alpha |u(x)| < \infty$ subject to the estimate
\begin{equation}
\label{smallness2}
  \begin{aligned}
  \sup_{x \in \Omega} (|x|+1)^\alpha |&u(x)| + \| \nabla u \|_{\chi (q,\alpha+1)} \\ &\le \gamma (\| F \|_{\chi (q,\alpha+1)} +  \| G \|_{\chi (q,\alpha+1)} +  \| H \|_{\chi (q,\alpha+1)})
  \end{aligned}
\end{equation}
with $\gamma = \gamma (\Omega,q,\alpha)$. The solution $u$ also satisfies the symmetry properties \eqref{symmetry of function} and \eqref{symmetry of function2}.

Observe that we may assume the external force $f$ is given in the form \eqref{form} without loss of generality. Indeed, if $f=\text{div } \Psi = \left( \sum_{i=1}^2 \partial_i \Psi_{ij} \right)_{j=1,2}$ with $\Psi =\{ \Psi_{ij} \}_{i,j=1,2}$, then we put
\begin{align*}
   \Phi &= \frac{1}{2} (\Psi_{11} + \Psi_{22}), \quad F = \frac{1}{2} (\Psi_{11} - \Psi_{22}), \\
   G &= \frac{1}{2} (\Psi_{12} + \Psi_{21}), \quad H = \frac{1}{2} (-\Psi_{12} + \Psi_{21}),
\end{align*}
to deduce that $f$ is represented as \eqref{form} by absorbing the term $\nabla \Phi$ into $\nabla p$. Note also that, by the properties of $\chi (q,\alpha+1)$, we have $F,G,H,\nabla u \in L^r (\Omega)$ for every $r \in (1,q]$. Since $q>2$, it follows that $f \in \dot{H}_{0,\sigma}^{-1} (\Omega)$ and $u \in \dot{H}_{0,\sigma}^{1,S} (\Omega)$ in particular.

As a consequence of Theorem \ref{theorem} and the result of Yamazaki \cite{Y2} mentioned above, we derive the following assertion.

\begin{corollary}
Let $q \in (2,\infty)$. Assume that $\Omega$ is an exterior domain with $C^{2+\mu}$-boundary, $\mu >0$, satisfying \eqref{ysymmetry} and the external force $f$ is given in the form \eqref{form} with $F$, $G$ and $H$ satisfying the conditions \eqref{F}, \eqref{G} and \eqref{H} respectively. Suppose $u \in \dot{H}_{0,\sigma}^{1,S} (\Omega)$ is a symmetric weak solution of \eqref{eq:NS} with the energy inequality $\| \nabla u \|_2^2 \le (f,u)$. If
\begin{equation*}
 \| F \|_{\chi (q,2)} +  \| G \|_{\chi (q,2)} +  \| H \|_{\chi (q,2)} \le \min \{ \beta, \gamma^{-1} \delta \}
\end{equation*}
where $\beta =\beta (\Omega,q,1)$, $\gamma = \gamma (\Omega,q,1)$ and $\delta = \delta (\Omega)$ are the constants, respectively, in \eqref{smallness}, \eqref{smallness2} and Theorem \ref{theorem}, then
\begin{equation*}
 (|x|+1)|u(x)| \in L^\infty (\Omega) \quad \text{and} \quad \nabla u \in L^r (\Omega) \ \ \text{for every } \ r \in (1,q] .
\end{equation*}
\end{corollary}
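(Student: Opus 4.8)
The plan is to combine the existence theorem of Yamazaki quoted above with the uniqueness theorem (Theorem \ref{theorem}) of this paper. First I would observe that under the hypotheses of the corollary the smallness assumption \eqref{smallness} holds with $\alpha =1$: indeed $\|F\|_{\chi(q,2)}+\|G\|_{\chi(q,2)}+\|H\|_{\chi(q,2)}\le\min\{\beta,\gamma^{-1}\delta\}\le\beta$. Hence Yamazaki's result produces a solution $v$ of \eqref{eq:NS} with $\nabla v\in\chi(q,2)$ and $\sup_{x\in\Omega}(|x|+1)|v(x)|<\infty$, satisfying the symmetry properties \eqref{symmetry of function} and \eqref{symmetry of function2}, together with the bound \eqref{smallness2}, namely
\begin{equation*}
 \sup_{x\in\Omega}(|x|+1)|v(x)|+\|\nabla v\|_{\chi(q,2)}\le\gamma\bigl(\|F\|_{\chi(q,2)}+\|G\|_{\chi(q,2)}+\|H\|_{\chi(q,2)}\bigr)\le\gamma\cdot\gamma^{-1}\delta=\delta.
\end{equation*}
In particular $\sup_{x\in\Omega}(|x|+1)|v(x)|\le\delta$.

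Next I would check that $v$ is a symmetric weak solution in the sense of the Definition of this paper, i.e.\ that $v\in\dot H_{0,\sigma}^{1,S}(\Omega)$ and $v$ satisfies \eqref{definition} for all $\varphi\in C_{0,\sigma}^\infty(\Omega)$. For the first point, by the discussion following \eqref{smallness2} in the excerpt, the properties of the spaces $\chi(q,\alpha+1)$ give $F,G,H,\nabla v\in L^r(\Omega)$ for every $r\in(1,q]$; since $q>2$ this yields $f\in\dot H_{0,\sigma}^{-1}(\Omega)$ and $v\in\dot H_{0,\sigma}^1(\Omega)$, and the symmetry \eqref{symmetry of function} implies in particular the weaker symmetry \eqref{symmetry of function3}, so $v\in\dot H_{0,\sigma}^{1,S}(\Omega)$. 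For the second point, I would invoke Remark \ref{def}: Yamazaki's solution is, after the reduction there, a symmetric weak solution in the sense of \cite{G2,PR}, and Remark \ref{def} shows that such solutions satisfy \eqref{definition} for all $\varphi\in C_{0,\sigma}^\infty(\Omega)$; alternatively one verifies directly, using the decay and the symmetry, that the weak form extends from $C_{0,\sigma}^{\infty,S}(\Omega)$ to all of $C_{0,\sigma}^\infty(\Omega)$ by the $\varphi\mapsto\varphi^S$ symmetrization argument.

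With both $u$ (given, with the energy inequality) and $v$ (just constructed) being symmetric weak solutions of \eqref{eq:NS} in $\dot H_{0,\sigma}^{1,S}(\Omega)$ having the same symmetry type, and with $\sup_{x\in\Omega}(|x|+1)|v(x)|\le\delta$, all the hypotheses of Theorem \ref{theorem} are met, so $u=v$. The conclusion then follows: $(|x|+1)|u(x)|=(|x|+1)|v(x)|\in L^\infty(\Omega)$ by \eqref{smallness2}, and $\nabla u=\nabla v\in\chi(q,2)\subset L^r(\Omega)$ for every $r\in(1,q]$ by the inclusion properties of $\chi(q,2)$ noted in the excerpt.

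The only genuinely delicate point is the verification that $u$ and $v$ "have the same symmetry property in \eqref{symmetry of function3}" as required by Theorem \ref{theorem}: $u$ is only assumed to lie in $\dot H_{0,\sigma}^{1,S}(\Omega)$, which merely says \emph{some} choice of signs in \eqref{symmetry of function3} holds componentwise, whereas $v$ satisfies the specific full symmetry \eqref{symmetry of function}. I expect this to be handled either by noting that \eqref{symmetry of function} forces the particular instance of \eqref{symmetry of function3} in which $u_1$ is odd in $x_1$ and $u_2$ is odd in $x_2$, so that if $u$ is to be compared with $v$ one takes $u$ with that same symmetry pattern, or—more cleanly—by a preliminary reduction (as in Remark \ref{def}) replacing $u$ by its symmetrization, which does not change the relevant quantities. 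Apart from this bookkeeping, the corollary is a direct splicing of the two cited results and requires no new estimates.
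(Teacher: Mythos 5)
Your proposal is correct and is essentially the argument the paper intends: the corollary is stated as a direct consequence of Yamazaki's existence theorem (applied with $\alpha=1$, the smallness hypothesis giving $\sup_{x\in\Omega}(|x|+1)|v(x)|\le\gamma\cdot\gamma^{-1}\delta=\delta$) combined with Theorem \ref{theorem}, exactly as you splice them. Your side remark about matching the symmetry pattern of $u$ to the specific instance of \eqref{symmetry of function3} forced by \eqref{symmetry of function} for $v$ is a fair reading of a hypothesis the paper leaves implicit, and your handling of it is adequate.
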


\section{Density property}
In this section we prove the density property for the solenoidal vector field. The main result in this section is the following proposition. 
\begin{proposition}
\label{exterior}
Let $\Omega$ be an exterior domain with Lipschitz boundary. For every $v \in \dot{H}_{0,\sigma}^1 (\Omega) $ with
\begin{equation*}
 \sup_{x \in \Omega} (|x|+1) |v(x)| < \infty,
\end{equation*}
there exists a sequence $\{ v_n \}_{n=1}^{\infty} \subset C_{0,\sigma}^\infty (\Omega)$ such that
\begin{equation*}
 \begin{aligned}
 &\nabla v_n \rightarrow \nabla v &\quad&\text{in } L^2 (\Omega) , \\
 &(|x|+1) v_n \rightarrow (|x|+1)v &\quad&\text{weakly } * \ \text{in } L^\infty (\Omega)
 \end{aligned}
\end{equation*}
as $n \rightarrow \infty$.
\end{proposition}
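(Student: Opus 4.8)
The plan is to approximate $v$ in three stages, arranged so that the uniform weighted bound $M:=\sup_{x\in\Omega}(|x|+1)|v(x)|$ is preserved along the way. First I would localise $v$ to a bounded region, repairing the divergence that the cut-off creates by the Bogovskii operator on a large annulus; then I would displace the support of the localised field away from the Lipschitz boundary $\partial\Omega$ by transporting it through a flow that slightly inflates the obstacle $\mathbb{R}^2\setminus\Omega$, using the Piola transform so as not to destroy the solenoidal condition; finally I would mollify, and a diagonal argument would deliver the sequence $\{v_n\}\subset C_{0,\sigma}^\infty(\Omega)$.

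For the first stage, fix $\phi\in C_0^\infty(\mathbb{R}^2)$ with $\phi\equiv1$ on $B(0,1)$ and $\phi\equiv0$ outside $B(0,2)$, set $\phi_R(x):=\phi(x/R)$, and choose $R$ so large that $\mathbb{R}^2\setminus\Omega\subset B(0,R)$, so that the annulus $A_R:=\{\,R\le|x|\le 2R\,\}$ lies in $\Omega$. Because $\text{div }v=0$ and $v$ vanishes on $\partial\Omega$, the scalar $g_R:=\nabla\phi_R\cdot v=\text{div}(\phi_R v)$ has vanishing mean over $A_R$, and the Bogovskii operator on $A_R$ furnishes $w_R$, supported in $A_R$, with $\text{div }w_R=g_R$ and the scale-invariant bound $\|\nabla w_R\|_{q,A_R}\le C_q\|g_R\|_{q,A_R}$ for every $q\in(1,\infty)$. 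Since $|\nabla\phi_R|\le C/R$ and $|v(x)|\le M/(|x|+1)$, one gets $\|g_R\|_{2,A_R}\le CM/R$ and, for a fixed $q>2$, $\|g_R\|_{q,A_R}\le CMR^{2/q-2}$; combined with the rescaled embedding $W_0^{1,q}(A_R)\hookrightarrow L^\infty(A_R)$ this yields $\|\nabla w_R\|_2\le CM/R$ and $\|(|x|+1)w_R\|_\infty\le CM$, uniformly in $R$. Hence $v^R:=\phi_R v-w_R$ obeys $\nabla v^R\to\nabla v$ in $L^2(\Omega)$ and $\sup_R\|(|x|+1)v^R\|_\infty<\infty$, and, as $v^R$ coincides with $v$ on any fixed compact set once $R$ is large, $v^R\to v$ in $\mathcal{D}'(\Omega)$, so that $(|x|+1)v^R\to(|x|+1)v$ weakly $*$ in $L^\infty(\Omega)$. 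It therefore suffices to approximate fields of the form $v^R$, i.e. I may henceforth assume $v$ has bounded support and extend it by zero to $\mathbb{R}^2$ (this preserves membership in $H^1(\mathbb{R}^2)$ because $v$ vanishes on $\partial\Omega$).

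For the second stage, the Lipschitz regularity of $\partial\Omega$ supplies a smooth vector field $X$, supported near $\partial\Omega$, transversal to $\partial\Omega$ and pointing into $\Omega$; let $\Lambda_\delta$ be its time-$\delta$ flow, so $\Lambda_\delta\to\mathrm{id}$ in $C^2$ and, for $\delta$ small, $\Lambda_\delta$ is a diffeomorphism of $\mathbb{R}^2$ that strictly enlarges the obstacle, whence $\Lambda_\delta(\mathbb{R}^2\setminus\Omega)$ contains a neighbourhood of $\partial\Omega$. Let $v_\delta$ be the Piola transform of $v$ by $\Lambda_\delta$; restricted to $\Omega$, it vanishes near $\partial\Omega$, stays solenoidal and of bounded support, satisfies $\|v_\delta\|_\infty\le 2\|v\|_\infty$ for $\delta$ small (since $D\Lambda_\delta\to I$ and $\det D\Lambda_\delta\to1$ uniformly), and $\nabla v_\delta\to\nabla v$ in $L^2(\Omega)$ as $\delta\to0$ (by the $H^1$-continuity of the Piola transport under a $C^2$-small perturbation of the identity, together with the fact that the region newly covered by the obstacle is thin). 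Mollifying $v_\delta$ at scale $\varepsilon\ll\delta$ produces $v_{\delta,\varepsilon}\in C_{0,\sigma}^\infty(\Omega)$ with $\|v_{\delta,\varepsilon}\|_\infty\le\|v_\delta\|_\infty$, support in a fixed bounded subset of $\Omega$, and $\nabla v_{\delta,\varepsilon}\to\nabla v_\delta$ in $L^2(\Omega)$. A diagonal choice $v_n:=v_{\delta_n,\varepsilon_n}$ then gives $\nabla v_n\to\nabla v$ in $L^2(\Omega)$ with $\|v_n\|_\infty$ bounded and all supports inside one fixed ball, so $(|x|+1)v_n$ is bounded in $L^\infty(\Omega)$; since also $v_n\to v$ in $\mathcal{D}'(\Omega)$, it follows that $(|x|+1)v_n\to(|x|+1)v$ weakly $*$ in $L^\infty(\Omega)$. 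Combining this with the first reduction through one further diagonal argument proves the proposition.

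I expect the second stage to be the main obstacle: there one must keep the approximants solenoidal, push their support into the interior of a domain bounded only by a Lipschitz hypersurface, and not enlarge the $L^\infty$-norm, all simultaneously. The first two requirements are reconciled by coupling the outward flow of the obstacle with the solenoidality-preserving Piola transform, and the third is automatic because that transport and the mollification are $L^\infty$-bounded operations; but verifying the existence of the smooth transversal field for a Lipschitz boundary and the $H^1$-continuity of the Piola transport as $\delta\to0$ are the delicate points. By contrast, the first stage is essentially routine, its one subtle ingredient being the weighted $L^\infty$ estimate for the Bogovskii corrector $w_R$ — and that is exactly the place where the hypothesis $\sup_{x\in\Omega}(|x|+1)|v(x)|<\infty$ is needed.
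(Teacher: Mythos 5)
Your first stage is essentially the paper's own treatment of the far field (Lemma \ref{plane} and the gluing in the proof of Proposition \ref{exterior}): cut off at scale $R$, repair the divergence with the Bogovskii operator on a comparable annulus, and use the dilation invariance of the Bogovskii constant (Lemma \ref{div}(ii)) together with $|v|\le M/(|x|+1)$ to keep both $\|\nabla w_R\|_2$ and the weighted sup norm under control. Your second stage, however, is genuinely different. The paper never pushes the support inward by a flow: it handles the boundary layer by the Abe--Giga localization (Lemma \ref{localization}), which splits the bounded piece into star-shaped subdomains via Bogovskii correctors, and then uses Masuda's dilation argument (Lemma \ref{star-shaped}) on each star-shaped piece, where ``$\lambda^{-1}\overline{D}\subset D$'' replaces your transversal flow and costs nothing to justify. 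What your route buys is conceptual economy (no star-shaped decomposition, one global deformation); what it costs is exactly the two facts you flag as delicate: the existence of a smooth vector field uniformly transversal to a merely Lipschitz $\partial\Omega$ whose time-$\delta$ flow makes $\Lambda_\delta(\mathbb{R}^2\setminus\Omega)$ contain a neighbourhood of $\mathbb{R}^2\setminus\Omega$, and the $H^1$-continuity of the Piola transport as $\delta\downarrow 0$. Both are true (the transversal field is built chartwise from the cone condition and glued with a partition of unity, using that $e_i\cdot\nu$ is bounded below on each chart), but they are not off-the-shelf one-liners, whereas every ingredient of the paper's route reduces to Lemma \ref{div}.

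One point you should tighten: in the final diagonalization the bound you actually record for the stage-two approximants is $\|v_{\delta,\varepsilon}\|_\infty\le 2\|v^R\|_\infty$ together with support in a ball of radius comparable to $R$, which gives $\|(|x|+1)v_{\delta,\varepsilon}\|_\infty\le CRM$, not a bound uniform in $R$. Since the weak-$*$ topology of $L^\infty$ is metrizable only on norm-bounded sets, a two-parameter diagonal extraction needs the uniform weighted bound $\|(|x|+1)v_{\delta,\varepsilon}\|_\infty\le CM$. This is in fact available --- the flow is the identity away from $\partial\Omega$, the Piola transform is a near-identity pointwise multiplication there, and mollification at scale $\varepsilon\le 1$ changes $(|x|+1)|\cdot|$ only by a factor $2$ (exactly the estimate $(|x|+1)|J_\varepsilon * v|\le 2M$ used in the paper) --- but it must be stated and carried through, since it is the quantity the proposition is about.
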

For the proof of this proposition, we show the corresponding density property in the whole plane $\mathbb{R}^2$ and bounded domains. Based on the analysis in $\mathbb{R}^2$ and bounded domains, we can prove the density property in exterior domains above. It should be emphasized that we need no symmetry in this section.

In what follows, we denote by $C$ various constants and, in particular, $C=C(\cdot,\cdots,\cdot)$ denotes constants depending only on the quantities in parentheses. We first introduce the Bogovski operator.

\begin{lemma}[\cite{B,BS,G1}]
\label{div}
Let $D$ be a bounded domain in $\mathbb{R}^n$, $n \ge 2$, with Lipschitz boundary and $1<q<\infty$.\\
$(\mathrm{i})$ There exists a linear operator $B_D : C_{0}^{\infty} (D) \rightarrow  C_{0}^{\infty} (D)^n$ such that
\begin{equation*}
 \| \nabla B_D f \|_{q,D} \le C \| f \|_{q,D}
\end{equation*}
with $C=C(n,q,D)$ independent of $f$ and that
\begin{equation*}
 \text{div } B_D f = f \quad \text{if}  \quad \int_{D} f \,dx = 0.
\end{equation*}
Furthermore, by continuity, $B_D$ is uniquely extended to a bounded linear operator from $L^q (D)$ to $W_0^{1,q} (D)^n$.\\
$(\mathrm{ii})$ Let $y \in \mathbb{R}^n$, $t \in \mathbb{R} \setminus \{ 0 \}$ and
\begin{equation*}
 D_t := \{ (1-t)y+tx :  x \in D \}.
\end{equation*}
Then the constant $C=C(n,q,D_t)$ associated with the operator $B_{D_t}$ is independent of $y$ and $t$.
\end{lemma}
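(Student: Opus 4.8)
The plan is to follow the classical construction of Bogovskii \cite{B}, in the form systematized by Galdi \cite{G1}. First I would reduce the general Lipschitz case to the case in which $D$ is star-shaped with respect to an open ball $B \subset D$, using the standard geometric fact that a bounded domain with Lipschitz boundary is a finite union $D = \bigcup_{k=1}^N D_k$ of subdomains, each star-shaped with respect to some ball $B_k$. On such a star-shaped domain one fixes a weight $\omega \in C_0^\infty(B)$ with $\int_B \omega \, dx = 1$ and defines $B_D$ as the explicit Bogovskii integral operator, $B_D f(x) = \int_D f(y)\, N(x,y)\, dy$, whose vector kernel transports the mass of $f$ along the rays joining points of $D$ to the pole ball $B$. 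Two facts are then verified by direct computation: star-shapedness forces $B_D f \in C_0^\infty(D)^n$ whenever $f \in C_0^\infty(D)$, and the fundamental identity
\begin{equation*}
 \text{div } B_D f = f - \omega \int_D f \, dy
\end{equation*}
holds, so that $\text{div } B_D f = f$ as soon as $\int_D f \, dy = 0$.

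The analytic core, and the step I expect to be the main obstacle, is the gradient bound $\| \nabla B_D f \|_{q,D} \le C \| f \|_{q,D}$. Near the diagonal the kernel $N(x,y)$ is weakly singular of order $|x-y|^{1-n}$, so each derivative $\partial_i (B_D f)_j$ splits into a principal part governed by a kernel that is homogeneous of degree $-n$ in $x-y$, has vanishing mean over spheres, and satisfies the H\"ormander smoothness condition, plus a remainder with an integrable kernel. The principal part is a Calder\'on--Zygmund singular integral, so the Calder\'on--Zygmund theorem yields $L^q$ boundedness for every $q \in (1,\infty)$, while the remainder is handled by Young's inequality; the resulting constant depends only on $n$, $q$ and the scale-invariant geometric data of $D$ relative to $B$. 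This singular-integral estimate is the only genuinely hard input.

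To return to a general Lipschitz domain I would argue by induction on the number $N$ of star-shaped pieces: the model step is that if $D = D_1 \cup D_2$ with $B_{D_1}, B_{D_2}$ already constructed and $D_1 \cap D_2$ containing a ball, then given $f$ with $\int_D f = 0$ one splits $f = f_1 + f_2$ with $f_k$ supported in $D_k$ and $\int_{D_k} f_k = 0$, redistributing the masses through the overlap by subtracting a suitable mean-zero correction supported in $D_1 \cap D_2$, and sets $B_D f = B_{D_1} f_1 + B_{D_2} f_2$. The piecewise $L^q$ bounds combine into the bound on $D$, and since $C_0^\infty(D)$ is dense in $L^q(D)$ the operator extends by continuity to a bounded map $L^q(D) \to W_0^{1,q}(D)^n$, which proves (i).

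For part (ii) I would exploit the affine covariance of the whole construction. The map $\Phi(x) = (1-t)y + tx$ is a translation composed with a dilation of factor $t$, so $D_t = \Phi(D)$ has exactly the same Lipschitz character and the same scale-invariant ratio of diameter to pole-ball radius as $D$. Transporting the construction through $\Phi$ — using the rescaled weight $\omega_t = |t|^{-n}\, \omega \circ \Phi^{-1}$ on $B_t = \Phi(B)$ and the corresponding operator $B_{D_t} g = t\,\left( B_D(g \circ \Phi) \right) \circ \Phi^{-1}$ — one checks that the divergence identity is preserved and that the two sides of the gradient inequality carry the same power of $t$, so that the constant is left unchanged. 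As translations affect none of these quantities, the constant associated with $B_{D_t}$ depends on $D_t$ only through scale-invariant data and is therefore independent of $y$ and $t$.
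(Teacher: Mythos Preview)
Your sketch is a faithful outline of the classical Bogovski\u{\i} construction as presented in the cited references \cite{B,BS,G1}, and the key steps---the explicit integral operator on star-shaped domains, the Calder\'on--Zygmund estimate for the gradient, the inductive patching for general Lipschitz domains, and the affine covariance argument for the scale invariance in (ii)---are all correctly identified. Note, however, that the paper does not supply its own proof of this lemma: it is stated as a known result with citations and used as a black box, so there is no in-paper argument to compare against. Your proposal essentially reproduces the proof strategy from Galdi's monograph \cite{G1}, which is the appropriate source.
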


The next lemma concerns the density property in the whole plane $\mathbb{R}^2$.

\begin{lemma}
\label{plane}
For every $v \in \dot{H}_{0,\sigma}^1 (\mathbb{R}^2)$ with
\begin{equation*}
 \sup_{x \in \mathbb{R}^2} (|x| +1)|v(x)| < \infty,
\end{equation*}
there exists a sequence $\{ v_n \}_{n=1}^{\infty} \subset C_{0,\sigma}^\infty (\mathbb{R}^2)$ such that
\begin{equation*}
 \begin{aligned}
  &\nabla v_n \rightarrow \nabla v &\quad&\text{in } L^2 (\mathbb{R}^2), \\
  &(|x|+1) v_n \rightarrow (|x|+1)v &\quad&\text{weakly } * \ \text{in } L^\infty (\mathbb{R}^2)
 \end{aligned}
\end{equation*}
as $n \rightarrow \infty$.
\end{lemma}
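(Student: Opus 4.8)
The plan is to approximate $v$ in two stages: first truncate $v$ to have compact support while keeping control of the $\dot H^1_{0,\sigma}$-norm and the weighted sup-norm, then mollify to obtain smooth solenoidal approximants. Let $M := \sup_{x\in\mathbb R^2}(|x|+1)|v(x)|$, which is finite by hypothesis. Pick a cutoff $\zeta\in C_0^\infty(\mathbb R^2)$ with $\zeta\equiv1$ on $|x|\le1$, $\zeta\equiv0$ on $|x|\ge2$, $0\le\zeta\le1$, and set $\zeta_R(x):=\zeta(x/R)$. The natural candidate $\zeta_R v$ is no longer divergence-free, but $\operatorname{div}(\zeta_R v)=\nabla\zeta_R\cdot v$ has mean zero (integrate by parts, using that $v$ is solenoidal and compactly supported modulo the cutoff region) and is supported in the annulus $A_R:=\{R\le|x|\le2R\}$. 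Applying the Bogovski\u\i{} operator $B_{A_R}$ from Lemma~\ref{div} on $A_R$, which after rescaling to the fixed annulus $\{1\le|x|\le2\}$ has a constant independent of $R$ by part (ii), we define
\begin{equation*}
 w_R := \zeta_R v - B_{A_R}(\nabla\zeta_R\cdot v),
\end{equation*}
extended by zero outside $A_R$; then $w_R\in \dot H^1_{0,\sigma}(\mathbb R^2)$ with compact support.

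The key estimates are then as follows. Since $|\nabla\zeta_R|\le C/R$ and $|v(x)|\le M/(|x|+1)\le CM/R$ on $A_R$, we get $\|\nabla\zeta_R\cdot v\|_{2,A_R}\le (C/R)\cdot(CM/R)\cdot|A_R|^{1/2}\le CM/R\to0$, hence by the rescaled Bogovski\u\i{} bound $\|\nabla B_{A_R}(\nabla\zeta_R\cdot v)\|_{2,A_R}\le CM/R\to0$. Also $\|\nabla(\zeta_R v)-\nabla v\|_2\le \|(1-\zeta_R)\nabla v\|_2+\|v\nabla\zeta_R\|_2$, and the first term tends to zero because $\nabla v\in L^2$ while the second is $\le CM/R\to0$ as above. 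Thus $\nabla w_R\to\nabla v$ in $L^2(\mathbb R^2)$. For the weighted sup-norm: $(|x|+1)|\zeta_R v|\le(|x|+1)|v|\le M$ uniformly, and $(|x|+1)|B_{A_R}(\cdots)|$ needs a pointwise bound on the Bogovski\u\i{} image supported in $A_R$, where $|x|+1\le 3R$; one can estimate $\|B_{A_R}(\nabla\zeta_R\cdot v)\|_{\infty,A_R}$ via Sobolev embedding on the rescaled annulus in $L^q$ for some $q>2$, picking up a factor that, combined with the $R$-scaling, gives $(|x|+1)\,|B_{A_R}(\cdots)|\le CM\,R^{-\varepsilon}$ or at worst stays bounded — this is the delicate point. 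Granting a uniform bound $(|x|+1)|w_R|\le CM$, the family $\{(|x|+1)w_R\}$ is bounded in $L^\infty$, so along a subsequence it converges weakly-$*$; identifying the limit as $(|x|+1)v$ follows because $w_R\to v$ in, say, $L^2_{loc}$ (from the $\dot H^1$ convergence plus Poincar\'e on balls), and a bounded weakly-$*$ convergent sequence with an $L^1_{loc}$ limit must converge to that limit.

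Finally, to pass from $w_R$ to smooth solenoidal functions: $w_R$ is compactly supported and divergence-free in $\mathbb R^2$, so standard Friedrichs mollification $w_R*\rho_\varepsilon\in C_0^\infty(\mathbb R^2)$ is again divergence-free, converges to $w_R$ in $\dot H^1$ as $\varepsilon\to0$, and satisfies $(|x|+1)|w_R*\rho_\varepsilon|\le$ (the sup over an $\varepsilon$-neighborhood of $(|x|+1)|w_R|$) $\le CM$ for $\varepsilon$ small, since the weight $|x|+1$ is locally Lipschitz with bounded variation on the fixed compact support. A diagonal argument over $R\to\infty$ and $\varepsilon=\varepsilon(R)\to0$ then produces the desired sequence $\{v_n\}\subset C_{0,\sigma}^\infty(\mathbb R^2)$ with $\nabla v_n\to\nabla v$ in $L^2$ and $(|x|+1)v_n\to(|x|+1)v$ weakly-$*$ in $L^\infty$.

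The main obstacle is controlling the pointwise (weighted-$L^\infty$) size of the Bogovski\u\i{} corrector $B_{A_R}(\nabla\zeta_R\cdot v)$ uniformly in $R$: the Bogovski\u\i{} operator is naturally bounded in Sobolev scales, not in $L^\infty$, so one must invoke a Sobolev embedding $W^{1,q}\hookrightarrow L^\infty$ on the rescaled annulus for $q>2$ and track how the $L^q$-norm of $\nabla\zeta_R\cdot v$ and the Jacobian of the rescaling interact with the weight $(|x|+1)\sim R$ on $A_R$; if the resulting bound merely stays $O(M)$ rather than decaying, one settles for boundedness (which suffices for weak-$*$ convergence) rather than strong convergence of the weighted corrector.
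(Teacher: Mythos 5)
Your construction is essentially the one in the paper: cut off on an expanding annulus, repair the divergence with the Bogovski operator of Lemma~\ref{div} (whose constant is scale\-/invariant by part (ii)), mollify, and extract a subsequence using the uniform weighted $L^\infty$ bound, identifying the weak-$*$ limit through local strong convergence. The one step you leave open does close, and in your favour: for $g=B_{A_R}(\nabla\zeta_R\cdot v)\in W_0^{1,q}(A_R)$ with $q>2$, rescaling the fixed-annulus embedding $W_0^{1,q}\hookrightarrow L^\infty$ gives the scale-invariant estimate $\|g\|_{\infty,A_R}\le CR^{1-2/q}\|\nabla g\|_{q,A_R}\le CR^{1-2/q}\|\nabla\zeta_R\cdot v\|_{q,A_R}$, while $\|\nabla\zeta_R\cdot v\|_{q,A_R}\le CMR^{-2}|A_R|^{1/q}\le CMR^{2/q-2}$; hence $\|g\|_{\infty,A_R}\le CM/R$ and $(|x|+1)|g|\le CM$ on $A_R$ --- bounded rather than decaying, which, as you observe, is all that weak-$*$ convergence requires. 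The paper obtains the same uniform bound by the scale-invariant chain $\|g\|_\infty\le C\|g\|_4^{1/2}\|\nabla g\|_4^{1/2}$, $\|g\|_4\le C\|\nabla g\|_{4/3}$, so that the factor $m\cdot m^{-1/4}\cdot m^{-3/4}=O(1)$; either route is fine. The remaining differences (you truncate before mollifying, the paper mollifies first; the precise choice of annulus) are immaterial, though note that mollifying the non-smooth $w_R$ requires the small observation, which you make, that convolution moves the weight $|x|+1$ by at most $\varepsilon$.
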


\begin{proof}
Choose a cutoff function $\psi \in C_0^{\infty} (\mathbb{R}^2)$ such that $0 \le \psi (x) \le 1$, $\psi (x)=1$ for $|x| \le 1$ and $\psi (x)=0$ for $|x| \ge 2$, and set $\psi_m (x) := \psi (x/m)$. We put
\begin{equation*}
 v_\epsilon := J_\epsilon * v, \qquad v_{\epsilon,m} := \psi_m v_\epsilon - B_m [\nabla \psi_m \cdot v_\epsilon],
\end{equation*}
where $J_{\epsilon}$ is the Friedrichs mollifier and $B_m$ is the operator introduced in Lemma \ref{div} for the bounded domain $E_m := \{ m/2<|x|<3m \}$. Since $\text{div } v_\epsilon =0$ in $\mathbb{R}^2$ and $\int_{E_m} \nabla \psi_m \cdot v_\epsilon \,dx = 0$, we can verify that $v_{\epsilon,m} \in C_{0,\sigma}^{\infty} (\mathbb{R}^2)$ for all $m=1,2,\ldots$ and $\epsilon >0$.

Let $M:=\sup_{x \in \mathbb{R}^2} (|x| +1)|v(x)|$. We show the estimate
\begin{equation}
 \label{uniform}
  \sup_{x \in \mathbb{R}^2} (|x|+1) |v_{\epsilon,m} (x)| \le CM
\end{equation}
with $C$ independent of $m$ and $\epsilon$. We have
\begin{equation*}
  (|x|+1)|v_\epsilon| \le  I_1 + I_2
\end{equation*}
where
\begin{equation*}
 \begin{aligned}
  I_1 (x) &:= \int_{\mathbb{R}^2} J_\epsilon (x-y) ||x|-|y|| |v(y)| \,dy , \\
  I_2 (x) &:= \int_{\mathbb{R}^2} J_\epsilon (x-y) (|y|+1) |v(y)| \,dy .
 \end{aligned}
\end{equation*}
We see
\begin{equation*}
  I_2 \le \sup_{y \in \mathbb{R}^2} (|y|+1)|v(y)| \int_{\mathbb{R}^2} J_\epsilon (x-y) \,dy = M.
\end{equation*}
Recall that $J_\epsilon (x-y) =0$ for $|x-y| \ge \epsilon$, and for $|x-y| \le \epsilon \le 1$ there holds
\begin{equation*}
  ||x|-|y|| \le |x-y| \le \epsilon \le 1.
\end{equation*}
Hence
\begin{equation*}
 I_1 \le \sup_{y \in \mathbb{R}^2} |v(y)| \int_{\mathbb{R}^2} J_\epsilon (x-y) \,dy \le M.
\end{equation*}
Therefore we obtain the estimate
\begin{equation}
 \label{v_epsilon}
 (|x|+1) |v_\epsilon| \le 2M.
\end{equation}
On the other hand, since $\text{supp } B_m [\nabla \psi_m \cdot v_\epsilon]$ is contained in $E_m$, it follows from the Gagliardo-Nirenberg inequality and the Sobolev embedding that
\begin{align*}
  (|x|+1)|B_m [\nabla \psi_m \cdot v_\epsilon]| &\le Cm|B_m [\nabla \psi_m \cdot v_\epsilon]| \\
  &\le Cm \| B_m [\nabla \psi_m \cdot v_\epsilon] \|_{4,E_m}^{1/2} \| \nabla B_m [\nabla \psi_m \cdot v_\epsilon] \|_{4,E_m}^{1/2} \\
  &\le Cm \| \nabla B_m [\nabla \psi_m \cdot v_\epsilon] \|_{4/3,E_m}^{1/2} \| \nabla \psi_m \cdot v_\epsilon \|_{4,E_m}^{1/2} \\
  &\le Cm \| \nabla \psi_m \cdot v_\epsilon \|_{4/3,E_m}^{1/2} \| \nabla \psi_m \cdot v_\epsilon \|_{4,E_m}^{1/2} .
\end{align*}
Note that the constants $C$ above are independent of $m$ and $\epsilon$, due to Lemma \ref{div}$(\mathrm{ii})$. Since $|v_\epsilon| \le CM/m$ on $E_m$ and $|\nabla \psi_m| \le C/m$ for some constants $C$ independent of $m$ and $\epsilon$, direct calculations yield
\begin{equation*}
 \| \nabla \psi_m \cdot v_\epsilon \|_{4/3,E_m}^{1/2} \le CM^{1/2} m^{-1/4}, \quad \| \nabla \psi_m \cdot v_\epsilon \|_{4,E_m}^{1/2} \le CM^{1/2} m^{-3/4}.
\end{equation*}
Thus we derive
\begin{equation}
\label{B_m}
 (|x|+1)|B_m [\nabla \psi_m \cdot v_\epsilon]| \le CM
\end{equation}
with $C$ independent of $m$ and $\epsilon$. The uniform estimate \eqref{uniform} follows from \eqref{v_epsilon} and \eqref{B_m}.

Next, in view of $\text{supp } \nabla \psi_m \subset E_m$ and $|v_\epsilon| \le CM/m$ on $E_m$, we have
\begin{align*}
  &\| \nabla v_{\epsilon,m} - \nabla v_\epsilon \|_{2,\mathbb{R}^2} \\
  \le &\| (\psi_m -1) \nabla v_\epsilon \|_{2,\mathbb{R}^2} + \| (\nabla \psi_m) v_\epsilon \|_{2,E_m} + \| \nabla B_m [\nabla \psi_m \cdot v_\epsilon] \|_{2,E_m} \\
  \le &\| (\psi_m -1) \nabla v_\epsilon \|_{2,\mathbb{R}^2} + \| (\nabla \psi_m) v_\epsilon \|_{2,E_m} + C \| \nabla \psi_m \cdot v_\epsilon \|_{2,E_m} \\
  \le &\| (\psi_m -1) \nabla v_\epsilon \|_{2,\mathbb{R}^2} + CMm^{-1} \\
  \rightarrow &\ 0 \quad \text{as } \ m \rightarrow \infty.
\end{align*}
Here we have used Lemma \ref{div}$(\mathrm{ii})$. Furthermore, the class of $\nabla v$ implies
\begin{equation*}
  \nabla v_\epsilon \rightarrow \nabla v \quad \text{in } L^2 (\mathbb{R}^2) \quad \text{as } \  \epsilon \downarrow 0.
\end{equation*}

From the arguments above, $v$ is an accumulation point of the two-parameters family $\{ v_{\epsilon,m} \}_{\epsilon >0, m \in \mathbb{N}} \subset C_{0,\sigma}^\infty (\mathbb{R}^2)$ in $\dot{H}_{0,\sigma}^1 (\mathbb{R}^2)$, that is, we can take a subsequence  $\{ v_{\epsilon_j,m_j} \}_{j=1}^{\infty}$ such that $\| \nabla v_{\epsilon_j,m_j} - \nabla v \|_{2,\mathbb{R}^2} \le \frac{1}{j}$. We conclude from the uniform estimate \eqref{uniform} that there exists a subsequence  $\{ v_n \}_{n=1}^{\infty} \subset \{ v_{\epsilon_j,m_j} \}_{j=1}^{\infty}$ satisfying the desired density property.
\end{proof}

In order to establish the density property in a bounded domain $D$, we need two lemmas. We first construct an approximate sequence when $D$ is star-shaped, by following the argument due to Masuda \cite[Proposition 1]{Ma}. Recall that $D$ is star-shaped with respect to some point $x \in D$ if $\lambda^{-1} (\overline{D} -x) \subset D-x$ for all $\lambda >1$. By a translation we may assume that $\lambda^{-1} \overline{D} \subset D$ for all $\lambda >1$ if $D$ is star-shaped.

\begin{lemma}
\label{star-shaped}
Let $D$ be a star-shaped bounded domain with Lipschitz boundary. For every $v \in H_{0,\sigma}^1 (D) \cap L^\infty (D)$, there exists a sequence $\{ v_n \}_{n=1}^\infty \subset C_{0,\sigma}^\infty (D)$ such that
\begin{equation*}
 \begin{aligned}
 &\nabla v_n \rightarrow \nabla v &\quad&\text{in } L^2 (D), \\
 &(|x|+1)v_n \rightarrow (|x|+1)v &\quad&\text{weakly } * \ \text{in } L^\infty (D)
 \end{aligned}
\end{equation*}
as $n \rightarrow \infty$.
\end{lemma}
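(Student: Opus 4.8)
The plan is to adapt Masuda's dilation-and-mollification argument to the star-shaped setting while keeping the extra quantitative control coming from the hypothesis $v \in L^\infty(D)$. Since $D$ is star-shaped (after translation) with $\lambda^{-1}\overline{D} \subset D$ for every $\lambda > 1$, for each $\lambda > 1$ I would define the dilated function $v^\lambda(x) := v(\lambda x)$, which is supported in $\lambda^{-1}\overline{D}$, a compact subset of $D$. A direct change of variables shows $\nabla v^\lambda(x) = \lambda (\nabla v)(\lambda x)$, so $\|\nabla v^\lambda - \nabla v\|_{2,D} \to 0$ as $\lambda \downarrow 1$ by continuity of dilations in $L^2$ together with the factor $\lambda \to 1$; likewise $\|v^\lambda\|_{\infty,D} = \|v\|_{\infty,\lambda^{-1}D} \le \|v\|_{\infty,D}$, so the dilates stay uniformly bounded in $L^\infty(D)$. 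Moreover $\operatorname{div} v^\lambda = \lambda(\operatorname{div} v)(\lambda \cdot) = 0$, so solenoidality is preserved.

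Next I would mollify: because $\operatorname{supp} v^\lambda$ is a positive distance from $\partial D$, for $\epsilon > 0$ small enough (depending on $\lambda$) the function $v^{\lambda}_\epsilon := J_\epsilon * v^\lambda$ lies in $C_{0,\sigma}^\infty(D)$. Standard properties of the Friedrichs mollifier give $\nabla v^\lambda_\epsilon \to \nabla v^\lambda$ in $L^2(D)$ as $\epsilon \downarrow 0$, and $\|v^\lambda_\epsilon\|_{\infty,D} \le \|v^\lambda\|_{\infty,D} \le \|v\|_{\infty,D}$ by Young's inequality, uniformly in $\epsilon$. A diagonal selection over $\lambda = 1 + 1/n$ and a correspondingly small $\epsilon_n$ then produces $v_n \in C_{0,\sigma}^\infty(D)$ with $\|\nabla v_n - \nabla v\|_{2,D} \to 0$ and $\sup_n \|v_n\|_{\infty,D} \le \|v\|_{\infty,D} < \infty$. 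Since $D$ is bounded, $|x|+1$ is bounded on $D$, so $\{(|x|+1)v_n\}$ is bounded in $L^\infty(D)$; combined with $v_n \to v$ in $L^2(D)$ (which follows from the Poincaré inequality on the bounded domain $D$ applied to $v_n - v \in H^1_0(D)$, using $\|\nabla v_n - \nabla v\|_{2,D}\to 0$), one extracts a weak-$*$ convergent subsequence whose limit must be $(|x|+1)v$, because the weak-$*$ limit in $L^\infty(D) \subset L^2(D)$ is forced to agree with the $L^2$-limit.

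I expect the main obstacle to be the first step, namely verifying $\|\nabla v^\lambda - \nabla v\|_{2,D} \to 0$ cleanly: one must be careful that $v$, a priori only defined on $D$ and vanishing on $\partial D$ in the trace sense, extends by zero to an $\dot H^1$ function on $\mathbb{R}^2$ so that the dilate $v^\lambda$ and its gradient make sense and the continuity of translations/dilations in $L^2$ can be invoked; writing $\nabla v^\lambda - \nabla v = \lambda\big((\nabla v)(\lambda\cdot) - \nabla v\big) + (\lambda - 1)\nabla v$ and bounding each piece separately handles this. The remaining steps — mollification estimates, Poincaré on the bounded domain, and the weak-$*$ extraction — are routine. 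Note that the $L^\infty$ bound propagated through both dilation and mollification is what gives the weak-$*$ convergence with the correct limit; the weight $|x|+1$ plays no essential role here since $D$ is bounded, but it is kept for consistency with Proposition \ref{exterior}.
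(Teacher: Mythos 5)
Your proposal is correct and follows essentially the same route as the paper: dilation of the zero extension by $\lambda>1$ (using star-shapedness to push the support into $D$), mollification, the uniform bound $\|v_{\lambda,\epsilon}\|_{\infty,D}\le\|v\|_{\infty,D}$, and identification of the weak-$*$ limit via Poincar\'{e} and the $L^2$ convergence. The only cosmetic difference is that you justify $\|\nabla v_\lambda-\nabla v\|_{2,D}\to 0$ by continuity of dilations on $L^2(\mathbb{R}^2)$, whereas the paper approximates by a fixed $\psi\in C_{0,\sigma}^\infty(D)$ and uses uniform continuity of $\nabla\psi$; these are interchangeable.
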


\begin{proof}
Let $\tilde{v}$ be the zero extension of $v$, that is, $\tilde{v} (x)=v(x)$ if $x \in D$ and  $\tilde{v} (x)=0$ if $x \in \mathbb{R}^2 \setminus D$. For $\lambda >1$ and small $\epsilon >0$, we set
\begin{equation*}
 v_\lambda (x):= \tilde{v} (\lambda x) , \quad v_{\lambda, \epsilon} := J_\epsilon * v_\lambda
\end{equation*}
where $J_{\epsilon}$ is the Friedrichs mollifier. It follows from $\text{supp } v_\lambda \subset {\lambda}^{-1} \overline{D} \subset D$ that $\text{supp } v_{\lambda, \epsilon} \subset D$. Thus $v_{\lambda, \epsilon} \in C_{0,\sigma}^\infty (D)$ for all $\lambda >1$ and small $\epsilon >0$. We can also verify that $\nabla v_{\lambda,\epsilon} \rightarrow \nabla v_\lambda$ in $L^2 (D)$ as $\epsilon \downarrow 0$. Since $C_{0,\sigma}^\infty (D)$ is dense in $H_{0,\sigma}^1 (D)$, for each $\kappa >0$ there exists a function $\psi \in C_{0,\sigma}^\infty (D)$ such that $\| \nabla v - \nabla \psi \|_{2,D} =\| \nabla v_\lambda - \nabla \psi_\lambda \|_{2,D} < \kappa /3$. In addition, by the uniform continuity of $\nabla \psi$, there holds $\| \nabla \psi_\lambda - \nabla \psi \|_{2,D} < \kappa /3$ provided $1<\lambda \le 1+\delta$ for sufficiently small $\delta>0$. Hence, for $1<\lambda \le 1+\delta$, we deduce
\begin{equation*}
 \begin{aligned}
  \| \nabla v_\lambda - \nabla v \|_{2,D} &\le \| \nabla v_\lambda - \nabla \psi_\lambda \|_{2,D} + \| \nabla \psi_\lambda - \nabla \psi \|_{2,D} + \| \nabla \psi - \nabla v \|_{2,D} \\
  &< \frac{\kappa}{3} + \frac{\kappa}{3} + \frac{\kappa}{3} \\
  &= \kappa.
 \end{aligned}
\end{equation*}
Furthermore, we have $\| v_\lambda \|_{\infty,\mathbb{R}^2} =\| v \|_{\infty,D}$, which gives the estimate
\begin{equation*}
 \| v_{\lambda,\epsilon} \|_{\infty,D} \le \| v \|_{\infty,D}.
\end{equation*}

Since $v$ is an accumulation point of the family $\{ v_{\lambda,\epsilon} \}_{\lambda >1, \epsilon >0} \subset C_{0,\sigma}^\infty (D)$ in $H_{0,\sigma}^1 (D)$ and the uniform estimate above holds, we can take a subsequence $\{ v_n \}_{n=1}^{\infty}$ such that
\begin{equation*}
 \nabla v_n \rightarrow \nabla v \quad \text{in } L^2 (D) \ \ \text{as} \ \ n \rightarrow \infty \quad \text{and} \quad \| v_n \|_{\infty,D} \le \| v \|_{\infty,D}.
\end{equation*}
The estimate yields
\begin{equation}
 \label{uniform2}
 \| (|x|+1)v_n \|_{\infty,D} \le C \| v \|_{\infty,D}
\end{equation}
with $C=C(D)$. For $\varphi \in C_0^\infty (D)$ we employ the Poincar\'{e} inequality to obtain
\begin{equation}
\label{Poincare}
 \begin{aligned}
  ((|x|+1)v_n - (|x|+1)v, \varphi) &\le \| v_n -v \|_{2,D}  \| (|x|+1) \varphi \|_{2,D} \\
  &\le C \| \nabla v_n - \nabla v \|_{2,D}  \| (|x|+1) \varphi \|_{2,D} \\
  &\rightarrow 0 \quad \text{as } \ n \rightarrow \infty .
 \end{aligned}
\end{equation}
Since $C_0^\infty (D)$ is dense in $L^1 (D)$, it follows from \eqref{uniform2} and \eqref{Poincare} that
\begin{equation*}
 (|x|+1)v_n \rightarrow (|x|+1)v \quad \text{weakly } * \ \text{in} \ L^\infty (D)
\end{equation*}
as $n \rightarrow \infty$. The proof is complete.
\end{proof}

Next, we employ a localization procedure which is similar to Abe-Giga \cite[Lemma 6.2]{AG}.
\begin{lemma}
\label{localization}
Let $D$ be a bounded domain with Lipschitz boundary. Suppose $\{ G_m \}_{m=1}^N$ is an open covering of $\overline{D}$ and $D_m := D \cap G_m$. Then there exists a family of bounded linear operators $\{ T_m \}_{m=1}^N$ from $H_{0,\sigma}^1 (D) \cap L^\infty (D)$ to $H_{0,\sigma}^1 (D_m) \cap L^\infty (D_m)$ satisfying $v=\sum_{m=1}^N T_m v$.
\end{lemma}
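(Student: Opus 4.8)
The plan is to localise $v$ by a smooth partition of unity and, on each patch, to restore the solenoidal condition with the Bogovski operator of Lemma~\ref{div}; the construction must be carried out \emph{recursively}, peeling off one patch at a time, so that the divergence corrections do not destroy the identity $v=\sum_{m}T_{m}v$.

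First I would fix the geometry. By the shrinking lemma choose open sets $G_{m}'$ with $\overline{G_{m}'}\subset G_{m}$ such that $\{G_{m}'\}_{m=1}^{N}$ still covers $\overline{D}$, and $\phi_{m}\in C_{0}^{\infty}(G_{m})$ with $0\le\phi_{m}\le1$ and $\phi_{m}\equiv1$ on a neighbourhood $\hat{G}_{m}$ of $\overline{G_{m}'}$. Put $v^{(0)}:=v$ and assume inductively that $v^{(m-1)}\in H_{0,\sigma}^{1}(D)\cap L^{\infty}(D)$ with $\operatorname{supp}v^{(m-1)}$ disjoint from $\overline{G_{1}'}\cup\dots\cup\overline{G_{m-1}'}$. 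Set $g_{m}:=\nabla\phi_{m}\cdot v^{(m-1)}$. Since $\nabla\phi_{m}\equiv0$ on $\hat{G}_{m}$ and $v^{(m-1)}$ vanishes near $\overline{G_{j}'}$ for $j<m$, the function $g_{m}$ is supported in a compact set $K_{m}\subset G_{m}\cap\overline{D}$ with $\operatorname{dist}(K_{m},\overline{G_{1}'}\cup\dots\cup\overline{G_{m}'})>0$, and $\int g_{m}\,dx=0$: extending $v^{(m-1)}$ by zero to $\mathbb{R}^{2}$ keeps it solenoidal (here $v^{(m-1)}\in H_{0}^{1}(D)$ is used), so $\int_{\mathbb{R}^{2}}\nabla\phi_{m}\cdot\widetilde{v^{(m-1)}}\,dx=\int_{\mathbb{R}^{2}}\operatorname{div}(\phi_{m}\widetilde{v^{(m-1)}})\,dx=0$. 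Choose a bounded domain $U_{m}$ to which Lemma~\ref{div} applies (a finite union of balls and coordinate patches adapted to the Lipschitz boundary, possibly after refining $\{G_{m}\}$ to a cover by balls) with $U_{m}\subset D\cap G_{m}$, $\{g_{m}\neq0\}\subset U_{m}$ and $\overline{U_{m}}\cap(\overline{G_{1}'}\cup\dots\cup\overline{G_{m}'})=\emptyset$, and define
\[
T_{m}v:=w_{m}:=\phi_{m}v^{(m-1)}-B_{U_{m}}[g_{m}],\qquad v^{(m)}:=v^{(m-1)}-w_{m},
\]
so that $v^{(m)}=(1-\phi_{m})v^{(m-1)}+B_{U_{m}}[g_{m}]$.

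Next I would verify the required properties. As $g_{m}\in L^{\infty}(U_{m})\subset L^{q}(U_{m})$ for every $q\in(2,\infty)$, Lemma~\ref{div}$(\mathrm{i})$ and the Poincar\'e inequality give $B_{U_{m}}[g_{m}]\in W_{0}^{1,q}(U_{m})$ with $\operatorname{div}B_{U_{m}}[g_{m}]=g_{m}$ and $\|B_{U_{m}}[g_{m}]\|_{1,q,U_{m}}\le C\|g_{m}\|_{q,U_{m}}\le C\|v^{(m-1)}\|_{\infty,D}$; and $W_{0}^{1,q}(U_{m})\subset H_{0}^{1}(U_{m})\cap L^{\infty}(U_{m})$ since $q>2$ in the plane. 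Extending by zero, $w_{m}\in H_{0}^{1}(D_{m})\cap L^{\infty}(D_{m})$, while $\operatorname{div}w_{m}=\operatorname{div}(\phi_{m}v^{(m-1)})-g_{m}=\phi_{m}\operatorname{div}v^{(m-1)}=0$, so $w_{m}\in H_{0,\sigma}^{1}(D_{m})\cap L^{\infty}(D_{m})$, and likewise $v^{(m)}\in H_{0,\sigma}^{1}(D)\cap L^{\infty}(D)$. Moreover $(1-\phi_{m})v^{(m-1)}$ vanishes on an open neighbourhood of $\overline{G_{1}'}\cup\dots\cup\overline{G_{m}'}$ (use $\phi_{m}\equiv1$ on $\hat{G}_{m}$ and the inductive hypothesis), and $B_{U_{m}}[g_{m}]$ is supported in $\overline{U_{m}}$, which is disjoint from that set; hence $\operatorname{supp}v^{(m)}$ stays disjoint from $\overline{G_{1}'}\cup\dots\cup\overline{G_{m}'}$, and the induction proceeds. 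At the last stage $\operatorname{supp}v^{(N-1)}\subset\overline{D}\setminus(\overline{G_{1}'}\cup\dots\cup\overline{G_{N-1}'})\subset G_{N}'$, because $\{G_{j}'\}$ covers $\overline{D}$, so $v^{(N-1)}$ is supported where $\phi_{N}\equiv1$; thus $g_{N}=0$, $w_{N}=v^{(N-1)}$ and $v^{(N)}=0$. Summing the recursion yields $v=\sum_{m=1}^{N}w_{m}=\sum_{m=1}^{N}T_{m}v$. Linearity of $T_{m}$ in $v$ is evident, and the bounds above together with the inductive estimates $\|v^{(m-1)}\|_{\infty,D}\le C\|v\|_{\infty,D}$ and $\|\nabla v^{(m-1)}\|_{2,D}\le C(\|\nabla v\|_{2,D}+\|v\|_{\infty,D})$ give the boundedness of $T_{m}$ from $H_{0,\sigma}^{1}(D)\cap L^{\infty}(D)$ into $H_{0,\sigma}^{1}(D_{m})\cap L^{\infty}(D_{m})$.

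I expect the main obstacle to be the geometric bookkeeping concealed in the choice of the auxiliary domains $U_{m}$: each $U_{m}$ must simultaneously be admissible for the Bogovski operator of Lemma~\ref{div} (bounded, with Lipschitz — or star-shaped-with-respect-to-a-ball — structure), contain the support of $g_{m}$, lie in $D\cap G_{m}$, and keep its closure away from the patches $\overline{G_{1}'},\dots,\overline{G_{m}'}$ already treated. This last property is precisely what confines each correction $B_{U_{m}}[g_{m}]$ to $D_{m}$ while propagating the support induction, and thereby forces $v^{(N)}=0$; using the Bogovski operator on all of $D\cap G_{m}$ instead would let the correction bleed into the already-treated region and leave $\sum_{m}T_{m}v$ differing from $v$ by an uncontrolled solenoidal remainder. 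The remaining verifications are routine.
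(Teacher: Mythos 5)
Your sequential ``peeling'' construction is genuinely different from the paper's argument, which instead proceeds by induction on $N$: the cover is split into $G_1$ and $V=\bigcup_{m\ge2}G_m$, a two-function partition of unity $\xi_1+\xi_2=1$ is used, and \emph{both} corrections $B_E[\nabla\xi_1\cdot v]$ and $B_E[\nabla\xi_2\cdot v]$ are taken on the \emph{same} fixed domain $E=D_1\cap\bigcup_{m\ge2}D_m$, so they cancel (because $\nabla\xi_1+\nabla\xi_2=0$) and the identity $v=T_1v+T_Uv$, as well as the linearity and boundedness of the operators, are immediate; no shrunken cover and no support bookkeeping are needed. Your route avoids the recursion-through-$T_U$ compositions, but it pays for this with exactly the geometric bookkeeping you flag, and there is one concrete gap in it.

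The gap is that, as written, your auxiliary domain $U_m$ depends on $v$. You require $\{g_m\neq0\}\subset U_m$ and $\overline{U_m}\cap(\overline{G_1'}\cup\dots\cup\overline{G_m'})=\emptyset$, and you justify the compatibility of these two requirements from the inductive statement that $\text{supp}\,v^{(m-1)}$ avoids $\overline{G_1'}\cup\dots\cup\overline{G_{m-1}'}$ --- a $v$-dependent piece of information. You cannot instead fix $U_m$ as a neighbourhood of all of $\text{supp}\,\nabla\phi_m\cap D$, since that set may meet $\overline{G_j'}$ for $j<m$. But if $U_m$ varies with $v$, then $T_mv=\phi_mv^{(m-1)}-B_{U_m}[g_m]$ is not a single linear operator (two different inputs would be corrected on different domains), so the concluding claim that linearity ``is evident'' does not follow from the construction as stated. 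The repair is to strengthen the induction so that it tracks a \emph{fixed} closed set rather than $\text{supp}\,v^{(m-1)}$: put $S_0:=\overline{D}$ and $S_m:=(S_{m-1}\setminus\hat{G}_m)\cup\overline{U_m}$, show $\text{supp}\,v^{(m)}\subset S_m$ for every $v$, and choose $U_m$ once and for all as a Bogovski-admissible neighbourhood of $\text{supp}\,\nabla\phi_m\cap S_{m-1}\cap D$ inside $D\cap G_m$ avoiding $\overline{G_1'}\cup\dots\cup\overline{G_m'}$; this is possible because $S_{m-1}$ avoids $\overline{G_1'}\cup\dots\cup\overline{G_{m-1}'}$ and $\text{supp}\,\nabla\phi_m$ avoids $\hat{G}_m\supset\overline{G_m'}$, and one still gets $S_{N-1}\subset\overline{D}\setminus\bigcup_{j<N}G_j'\subset G_N'$, so the termination step $v^{(N)}=0$ survives. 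With that modification your argument goes through, up to the subtlety --- shared with the paper's own proof for $E=D_1\cap U$ --- that the correction domain may be disconnected, in which case the Bogovski operator needs the mean-zero condition on each component and not merely globally.
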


\begin{proof}
Following Abe-Giga \cite[Lemma 6.2]{AG}, we give the proof by induction with respect to $N$. If $N=1$, the assertion is obvious.

Assume that the assertion is valid for $N$. Set
\begin{equation*}
 U := \bigcup_{m=2}^{N+1} D_m, \quad V := \bigcup_{m=2}^{N+1} G_m, \quad E:=D_1 \cap U.
\end{equation*}
Then $D=D_1 \cup U$ and $\{ G_1,V \}$ is a covering of $\overline{D}$. Let $\{ \xi_1, \xi_2 \}$ be a smooth partition of unity of $D$ associated with $\{ G_1,V \}$, that is, $0 \le \xi_i \le 1$ $(i=1,2)$, $\text{supp } \xi_1 \subset G_1$, $\text{supp } \xi_2 \subset V$ and $\xi_1 + \xi_2 =1$ on $D$. For $v \in H_{0,\sigma}^1 (D) \cap L^\infty (D)$ we define the operator $T_1$ by
\begin{equation*}
 T_1 v := \xi_1 v - B_E [\nabla \xi_1 \cdot v]
\end{equation*}
where $B_E$ is the Bogovski operator defined by Lemma \ref{div} for $E$. In the case where $E$ is the union of disjoint Lipschitz domains, for instance, $E_1$ and $E_2$, we have only to replace the term $B_E [\nabla \xi_1 \cdot v]$ above by $ \sum_{i=1}^2 B_{E_i} [\nabla \xi_1 \cdot v]$. Since $\nabla \xi_1 =0$ in $D_1 \setminus E$, we have
\begin{equation*}
 \int_{E} \nabla \xi_1 \cdot v \,dx = \int_{D_1} \nabla \xi_1 \cdot v \,dx =0 .
\end{equation*}
Hence Lemma \ref{div}($\mathrm{i}$) and $\nabla \xi_1 \cdot v \in L^\infty (E)$ imply $\text{div } T_1 v=0$ in $D_1$ and $B_E [\nabla \xi_1 \cdot v] \in W_0^{1,q} (E)$ for all $1 < q < \infty$. Using the Sobolev embedding, Lemma \ref{div}$(\mathrm{i})$ and the Poincar\'{e} inequality, for $q>2$ we obtain
\begin{equation}
 \label{Bogovski estimate}
  \| B_E [\nabla \xi_1 \cdot v] \|_{\infty,E} \le C\| B_E [\nabla \xi_1 \cdot v] \|_{1,q,E} \le C\| \nabla \xi_1 \cdot v \|_{q,E} \le C\| v \|_{\infty,D}.
\end{equation}
This estimate, together with $\| T_1 v \|_{1,2,D_1} \le C \| v \|_{1,2,D}$, shows that $T_1$ is a bounded linear operator from $H_{0,\sigma}^1 (D) \cap L^\infty (D)$ to $H_{0,\sigma}^1 (D_1) \cap L^\infty (D_1)$. On the other hand, we put
\begin{equation*}
 T_U v := \xi_2 v - B_E [\nabla \xi_2 \cdot v].
\end{equation*}
The same argument as above yields that $T_U$ is a bounded linear operator from $H_{0,\sigma}^1 (D) \cap L^\infty (D)$ to $H_{0,\sigma}^1 (U) \cap L^\infty (U)$. Furthermore
\begin{equation*}
 v= T_1 v + T_U v.
\end{equation*}
Since $U$ is covered by $\{ G_m \}_{m=2}^{N+1}$, by the induction assumption there exists a family of bounded linear operators $\{ \widehat{T}_m \}_{m=2}^{N+1}$ from $H_{0,\sigma}^1 (U) \cap L^\infty (U)$ to $H_{0,\sigma}^1 (D_m) \cap L^\infty (D_m)$ satisfying $u=\sum_{m=2}^{N+1} \widehat{T}_m u$ for $u \in H_{0,\sigma}^1 (U) \cap L^\infty (U)$. Setting
\begin{equation*}
 T_1 := T_1, \quad T_m := \widehat{T}_m \cdot T_U \quad (m=2, \ldots ,N+1),
\end{equation*}
we conclude that $\{ T_m \}_{m=1}^{N+1}$ is a family of bounded linear operators from $H_{0,\sigma}^1 (D) \cap L^\infty (D)$ to $H_{0,\sigma}^1 (D_m) \cap L^\infty (D_m)$ satisfying $v=\sum_{m=1}^{N+1} T_m v$.
\end{proof}

Collecting Lemmas \ref{star-shaped} and \ref{localization}, we can construct an approximate sequence in general bounded domains.

\begin{lemma}
\label{bounded}
The assertion in Lemma \ref{star-shaped} is also valid when $D$ is a bounded domain with Lipschitz boundary.
\end{lemma}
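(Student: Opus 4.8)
The plan is to reduce the case of a general bounded Lipschitz domain to the star-shaped case treated in Lemma \ref{star-shaped}, by means of the localization provided by Lemma \ref{localization}.

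First I would cover $\overline{D}$ by a finite family of open balls $\{G_m\}_{m=1}^N$ with the property that each $D_m := D \cap G_m$ is a star-shaped bounded domain with Lipschitz boundary. This is standard: finitely many balls $G_m$ with $\overline{G_m} \subset D$ cover the compact set $\{x \in D : \mathrm{dist}(x,\partial D) \ge \eta\}$, and for these $D_m = G_m$ is a ball, hence trivially star-shaped; the remaining boundary strip is covered by balls $G_m$ centred at points of $\partial D$, and — choosing their radii small relative to the local Lipschitz graph charts — for such $m$ the set $D_m$ is the part of $D$ lying below (above) a Lipschitz graph inside a cube, which is star-shaped with respect to a suitable point, cf. \cite{G1}. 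This is the only genuinely non-routine point of the proof: it is precisely where the Lipschitz hypothesis on $\partial D$ is used, via the classical fact that a region bounded by a Lipschitz graph is star-shaped with respect to a point far enough from the graph.

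Given such a covering, Lemma \ref{localization} furnishes bounded linear operators $T_m : H_{0,\sigma}^1(D) \cap L^\infty(D) \to H_{0,\sigma}^1(D_m) \cap L^\infty(D_m)$ with $v = \sum_{m=1}^N T_m v$. Applying Lemma \ref{star-shaped} to each $T_m v$ on the star-shaped domain $D_m$, I obtain sequences $\{v_n^{(m)}\}_{n=1}^\infty \subset C_{0,\sigma}^\infty(D_m)$ with $\nabla v_n^{(m)} \to \nabla T_m v$ in $L^2(D_m)$, with $(|x|+1)v_n^{(m)} \to (|x|+1)T_m v$ weakly $*$ in $L^\infty(D_m)$, and with $\|v_n^{(m)}\|_{\infty,D_m} \le \|T_m v\|_{\infty,D_m}$ (this last bound being explicit in the proof of Lemma \ref{star-shaped}). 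Extending each $v_n^{(m)}$ by zero keeps it in $C_{0,\sigma}^\infty(D)$, and I set $v_n := \sum_{m=1}^N v_n^{(m)} \in C_{0,\sigma}^\infty(D)$.

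It remains to verify the two convergences for $\{v_n\}$. Since $\nabla v = \sum_{m=1}^N \nabla T_m v$ (zero extensions), $\nabla v_n \to \nabla v$ in $L^2(D)$ is immediate. By the Poincaré inequality this also gives $v_n \to v$ in $L^2(D)$, hence in $L^1(D)$ because $D$ is bounded; moreover $\|v_n\|_{\infty,D} \le \sum_{m=1}^N \|T_m v\|_{\infty,D_m} \le C\|v\|_{\infty,D}$ by boundedness of the $T_m$, so $\{(|x|+1)v_n\}$ is bounded in $L^\infty(D)$ (here $|x|+1$ is bounded on $\overline{D}$). A sequence bounded in $L^\infty(D)$ and convergent in $L^1(D)$ converges weakly $*$ in $L^\infty(D)$, by the usual truncation of the test function $g \in L^1(D)$ into a bounded part and an $L^1$-small part; since $(|x|+1)v_n \to (|x|+1)v$ in $L^1(D)$, this yields $(|x|+1)v_n \to (|x|+1)v$ weakly $*$ in $L^\infty(D)$ (equivalently, one adds up the weak-$*$ limits of the zero-extended pieces $(|x|+1)v_n^{(m)}$). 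This completes the proof.
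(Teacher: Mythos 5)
Your proof is correct and follows essentially the same route as the paper: cover $\overline{D}$ so that each $D_m=D\cap G_m$ is star-shaped Lipschitz, decompose $v=\sum_m T_m v$ via Lemma \ref{localization}, approximate each piece by the (proof of) Lemma \ref{star-shaped} keeping the uniform $L^\infty$ bound, sum the zero extensions, and upgrade the $L^1$/$L^2$ convergence plus the $L^\infty$ bound to weak-$*$ convergence. The only cosmetic difference is that you phrase the last step as ``bounded in $L^\infty$ and convergent in $L^1$ implies weak-$*$ convergent,'' whereas the paper tests against $C_0^\infty(D)$ and invokes its density in $L^1(D)$ --- the same mechanism.
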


\begin{proof}
It is well known that, by the assumption on the boundary $\partial D$, there exists an open covering $\{ G_m \}_{m=1}^N$ of $\overline{D}$ such that $D_m =D \cap G_m$ $(m=1, \ldots, N)$ are star-shaped bounded domains with Lipschitz boundary with respect to some open balls in $D_m$. Let $\{ T_m \}_{m=1}^N$ be the family of bounded linear operators introduced in Lemma \ref{localization}. For $m=1, \ldots , N$, we put $v_m := T_m v$. Then $v_m \in H_{0,\sigma}^1 (D_m) \cap L^\infty (D_m)$ and $v=\sum_{m=1}^N v_m$. Since $D_m$ are star-shaped, for each $m=1,\ldots ,N$ we can take by the proof of Lemma \ref{star-shaped} a sequence $\{ v_{m,n} \}_{n=1}^\infty \subset C_{0,\sigma}^\infty (D_m)$ such that
\begin{equation*}
 \begin{aligned}
 &\nabla v_{m,n} \rightarrow \nabla v_m \quad \text{in } L^2 (D_m) \ \ \text{as } \ n \rightarrow \infty, \\
 &\| (|x|+1)v_{m,n} \|_{\infty,D_m} \le C \| v_m \|_{\infty,D_m}
  \end{aligned}
\end{equation*}
with $C$ independent of $n$. We denote the zero extension of $v_{m,n}$ to $D \setminus D_m$ by $v_{m,n}$ itself for simplicity, and set $v_n := \sum_{m=1}^N v_{m,n}$. Then we derive
\begin{equation*}
  \| \nabla v_n - \nabla v \|_{2,D} \le \sum_{m=1}^N \| \nabla v_{m,n}  - \nabla v_m \|_{2,D_m} \rightarrow 0 \quad \text{as } \ n \rightarrow \infty .
\end{equation*}
Since $T_m$ is a bounded linear operator from $H_{0,\sigma}^1 (D) \cap L^\infty (D)$ to $H_{0,\sigma}^1 (D_m) \cap L^\infty (D_m)$, we have
\begin{align*}
  \| (|x|+1)v_n \|_{\infty,D} &\le \sum_{m=1}^N \| (|x|+1)v_{m,n} \|_{\infty,D_m} \\
 &\le \sum_{m=1}^N C \| v_m \|_{\infty,D_m} \\
 &\le C \| v \|_{H_{0,\sigma}^1 (D) \cap L^\infty (D)}
\end{align*}
with $C$ independent of $n$. This estimate, together with the same calculation as \eqref{Poincare} and the density property of $C_0^\infty (D)$ in $L^1 (D)$, yields
\begin{equation*}
 (|x|+1)v_n \rightarrow (|x|+1)v \quad \text{weakly } * \ \text{in } L^\infty (D)
\end{equation*}
as $n \rightarrow \infty$, and the result follows.
\end{proof}

Using Lemmas \ref{plane} and \ref{bounded}, we can prove Proposition \ref{exterior}. For the proof, we follow Kozono-Sohr \cite[Theorem 2]{KS}.

\begin{proof}[Proof of Proposition \ref{exterior}]
Let $M:=\sup_{x \in \Omega} (|x|+1) |v(x)|$ and take $R>0$ so that $\partial \Omega \subset B(0,R)$. We define a function $\tilde{v}$ by the zero extension of $v$. Then $\tilde{v} \in \dot{H}_{0,\sigma}^1 (\mathbb{R}^2)$ with $\sup_{x \in \mathbb{R}^2} (|x| +1)|\tilde{v} (x)| = M$. In view of Lemma \ref{plane}, there exists a sequence $\{ \tilde{v}_n \}_{n=1}^{\infty} \subset C_{0,\sigma}^\infty (\mathbb{R}^2)$ such that
\begin{equation}
 \label{in R2}
 \begin{aligned}
  &\nabla \tilde{v}_n \rightarrow \nabla \tilde{v} &\quad&\text{in } L^2 (\mathbb{R}^2), \\
  &(|x|+1) \tilde{v}_n \rightarrow (|x|+1) \tilde{v} &\quad&\text{weakly } * \ \text{in } L^{\infty} (\mathbb{R}^2)
 \end{aligned}
\end{equation}
as $n \rightarrow \infty$. In addition, we observe that $\nabla \tilde{v}_n \rightarrow \nabla \tilde{v}$ in $L^2 (\mathbb{R}^2)$ implies $\tilde{v}_n \rightarrow \tilde{v}$ in $L^2 (\Omega_R)$. Indeed, by the definition of $\tilde{v}$ and the construction of $\tilde{v}_n$ in the proof of Lemma \ref{plane}, we may assume $\tilde{v}_n - \tilde{v} =0$ in some open ball contained in $\mathbb{R}^2 \setminus \Omega$. Hence we employ the Poincar\'{e} inequality to deduce
\begin{equation}
 \label{loc}
 \| \tilde{v}_n - \tilde{v} \|_{2,\Omega_R} \le C \| \nabla \tilde{v}_n - \nabla \tilde{v} \|_{2,\Omega_R} \rightarrow 0 \quad \text{as} \ \ n \rightarrow \infty.
\end{equation}

Let $\zeta \in C^\infty (\mathbb{R}^2)$ be a cutoff function such that $0 \le \zeta (x) \le 1$, $\zeta (x) = 1$ for $|x| \ge R$ and $\zeta (x) = 0$ in the neighbourhood of $\partial \Omega$. Put $w_n := B_{\Omega_R} [\nabla \zeta \cdot \tilde{v}_n]$ and $w := B_{\Omega_R} [\nabla \zeta \cdot \tilde{v}]$ where $B_{\Omega_R}$ is the operator defined by Lemma \ref{div} for $\Omega_R = \Omega \cap B(0,R)$. Since $\nabla \zeta \cdot \tilde{v}_n \in C_0^\infty (\Omega_R)$ and $\int_{\Omega_R} \nabla \zeta \cdot \tilde{v}_n \,dx=0$, we deduce $w_n \in C_0^\infty (\Omega_R)$ and $\text{div } w_n = \nabla \zeta \cdot \tilde{v}_n$ in $\Omega_R$. Similarly, it follows from $\nabla \zeta \cdot \tilde{v} \in L^\infty (\Omega_R)$ and $\int_{\Omega_R} \nabla \zeta \cdot \tilde{v} \,dx=0$ that $w \in W_0^{1,q} (\Omega_R)$ $(1<q<\infty)$ satisfies $\text{div } w= \nabla \zeta \cdot \tilde{v}$ in $\Omega_R$. Furthermore, by Lemma \ref{div}$(\mathrm{i})$ and \eqref{loc}, we obtain
\begin{equation}
 \label{w_n1}
 \| \nabla w_n - \nabla w \|_{2,\Omega_R} \le C \| \nabla \zeta \cdot (\tilde{v}_n - \tilde{v}) \|_{2,\Omega_R} \rightarrow 0
\end{equation}
as $n \rightarrow \infty$. From the proof of Lemma \ref{plane}, we may assume $\| \tilde{v}_n \|_{\infty,\mathbb{R}^2} \le CM$ with $C$ independent of $n$. Thus the same calculation as \eqref{Bogovski estimate} yields
\begin{equation*}
 \| (|x|+1) w_n \|_{\infty,\Omega_R} \le CM
\end{equation*}
with $C=C(R)$. This estimate, together with the same calculation as \eqref{Poincare} and the density property of $C_0^\infty (\Omega_R)$ in $L^1 (\Omega_R)$, leads us to
\begin{equation}
 \label{w_n2}
 (|x|+1) w_n \rightarrow (|x|+1) w \quad \text{weakly } * \ \text{in } L^\infty (\Omega_R) \quad (n \rightarrow \infty).
\end{equation}

We also set $u:=(1-\zeta) \tilde{v} +w$. Then $u \in H_{0,\sigma}^1 (\Omega_R) \cap L^\infty (\Omega_R)$, and hence, according to Lemma \ref{bounded}, we can take a sequence $\{ u_n \}_{n=1}^\infty \subset C_{0,\sigma}^\infty (\Omega_R)$ such that
\begin{equation}
 \label{u_n}
 \begin{aligned}
  &\nabla u_n \rightarrow \nabla u &\quad&\text{in } L^2 (\Omega_R) , \\
  &(|x|+1) u_n \rightarrow (|x|+1)u &\quad&\text{weakly } * \ \text{in } L^\infty (\Omega_R)
 \end{aligned}
\end{equation}
as $n \rightarrow \infty$.

Now we define the sequence $\{ v_n \}_{n=1}^\infty$ by
\begin{equation*}
 v_n := \zeta \tilde{v}_n - \tilde{w}_n + \tilde{u}_n 
\end{equation*}
where $\tilde{w}_n$ and $\tilde{u}_n$ denote the zero extension of $w_n$ and $u_n$ respectively. Then $v_n \in C_{0,\sigma}^\infty (\Omega)$ for all $n=1,2,\ldots$. Since $v(x)=\zeta (x) \tilde{v} (x)-w(x) + u(x)$ for $x \in \Omega$, the properties \eqref{in R2}, \eqref{loc}, \eqref{w_n1}, \eqref{w_n2} and \eqref{u_n} yield
\begin{equation*}
 \begin{aligned}
 &\nabla v_n \rightarrow \nabla v &\quad&\text{in } L^2 (\Omega) , \\
 &(|x|+1)v_n \rightarrow (|x|+1)v &\quad&\text{weakly } * \ \text{in } L^\infty (\Omega)
 \end{aligned}
\end{equation*}
as $n \rightarrow \infty$.
\end{proof}

\begin{remark}
\label{alpha}
In the case $\sup_{x \in \Omega} (|x|+1)^\alpha |v(x)| < \infty$ with $\alpha >1$, we can prove similarly the existence of a sequence $\{ v_n \}_{n=1}^\infty \subset C_{0,\sigma}^\infty (\Omega)$ such that $\nabla v_n \rightarrow \nabla v$ in $L^2 (\Omega)$ and $(|x|+1)^\alpha v_n \rightarrow (|x|+1)^\alpha v$ weakly $*$ in $L^\infty (\Omega)$ as $n \rightarrow \infty$.
\end{remark}

\begin{remark}
This proposition is also valid even if $\Omega$ is an exterior domain in $\mathbb{R}^n$ with $n \ge 3$. Indeed, we can easily verify that Lemmas \ref{plane}, \ref{star-shaped}, \ref{localization} and \ref{bounded} are valid even in $\mathbb{R}^n$ and $D \subset \mathbb{R}^n$, and the proof of Proposition \ref{exterior} still holds for $\Omega \subset \mathbb{R}^n$.
\end{remark}

\section{Proof of Theorem \ref{theorem}}
In this section we give the proof of our main result. If $u,v \in \dot{H}_{0,\sigma}^{1,S} (\Omega)$ are symmetric weak solutions of \eqref{eq:NS}, then $u$ and $v$ satisfy
\begin{equation}
\label{eq:u weak}
  (\nabla u, \nabla \varphi) + (u \cdot \nabla u, \varphi) = (f,\varphi) \quad \text{for all } \varphi \in C_{0,\sigma}^\infty (\Omega)
\end{equation}
and
\begin{equation}
\label{eq:v weak}
  (\nabla v, \nabla \widetilde{\varphi}) + (v \cdot \nabla v, \widetilde{\varphi}) = (f,\widetilde{\varphi}) \quad \text{for all } \widetilde{\varphi} \in C_{0,\sigma}^\infty (\Omega)
\end{equation}
respectively. We take $u$ and $v$ as test functions, respectively, in \eqref{eq:v weak} and \eqref{eq:u weak}. Notice that we have almost no information on the class of the nonlinear term $u \cdot \nabla u$. The assumption $\sup_{x \in \Omega} (|x|+1)|v(x)| < \infty$ and Proposition \ref{exterior} play an important role to overcome this difficulty and we also need the Hardy inequality for symmetric functions, which is due to Galdi \cite[Lemma 3.1]{G2}.

\begin{lemma}[\cite{G2}]
\label{Hardy}
Let $\Omega$ be a symmetric exterior domain with locally Lipschitz boundary and assume that $u \in \dot{H}_0^1 (\Omega)$ satisfies the symmetry property \eqref{symmetry of function3}. Then there exists a constant $C=C(\Omega)$ such that
\begin{equation*}
  \int_{\Omega} \frac{|u(x)|^2}{|x|^2} \,dx \le C \| \nabla u \|_2^2 .
\end{equation*}
\end{lemma}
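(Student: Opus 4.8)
The plan is to reduce the inequality to the case of a smooth, compactly supported symmetric field and then exploit the fact that condition \eqref{symmetry of function3} kills the angular mean of each component, which is exactly what makes a two-dimensional Hardy inequality possible. First I would observe that the genuine content of \eqref{symmetry of function3} in polar coordinates $(r,\theta)$ is the following: each reflection $x_2\mapsto-x_2$ or $x_1\mapsto-x_1$ acts on the angle as the measure-preserving involution $\theta\mapsto-\theta$ or $\theta\mapsto\pi-\theta$, so oddness of a component $u_i$ under one of these reflections forces
\[
 \int_0^{2\pi} u_i(r,\theta)\,d\theta = 0 \qquad \text{for a.e. } r>0,\ i=1,2.
\]
Thus for each fixed radius the slice $u_i(r,\cdot)$ is a mean-zero periodic function, and the Poincar\'e--Wirtinger inequality on the circle (whose first nonzero eigenvalue of $-\partial_\theta^2$ equals $1$) gives $\int_0^{2\pi}|u_i(r,\theta)|^2\,d\theta \le \int_0^{2\pi}|\partial_\theta u_i(r,\theta)|^2\,d\theta$.

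With this in hand the core estimate is a one-line computation in polar coordinates. For $u_i\in C_0^\infty(\Omega)$ satisfying \eqref{symmetry of function3} (so that $u_i$ vanishes near the origin and near infinity, and all integrals below are finite) I would write
\begin{align*}
 \int_\Omega \frac{|u_i|^2}{|x|^2}\,dx
 &= \int_0^\infty\!\!\int_0^{2\pi} \frac{|u_i(r,\theta)|^2}{r}\,d\theta\,dr \\
 &\le \int_0^\infty\!\!\int_0^{2\pi} \frac{|\partial_\theta u_i(r,\theta)|^2}{r}\,d\theta\,dr
   \le \int_\Omega |\nabla u_i|^2\,dx,
\end{align*}
where the last inequality uses $|\nabla u_i|^2=|\partial_r u_i|^2+r^{-2}|\partial_\theta u_i|^2\ge r^{-2}|\partial_\theta u_i|^2$. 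Summing over $i=1,2$ yields the claim with the explicit constant $C=1$; the $\Omega$-dependence in the statement is thus a harmless relaxation. Note that no extension outside $\Omega$ is needed: smooth compactly supported fields already live on all of $\mathbb{R}^2$ once extended by zero, and the symmetry of $\Omega$ (condition \eqref{symmetry of domain}) guarantees the zero extension retains \eqref{symmetry of function3}.

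It remains to upgrade from $C_0^\infty$ fields to a general $u\in\dot H_0^1(\Omega)$ with the symmetry \eqref{symmetry of function3}, and this density/limiting step is where I expect the only real technical care to be required. I would first note that the odd-part projection under the relevant reflection (compare the symmetrization used in Remark \ref{def}) is a bounded linear projection commuting with $\nabla$ and mapping $C_0^\infty(\Omega)$ into itself; applying it componentwise to a generic approximating sequence produces symmetric smooth fields $\varphi_n\in C_0^\infty(\Omega)$ with $\nabla\varphi_n\to\nabla u$ in $L^2(\Omega)$, since $u$ is already a fixed point of the projection. The differences $\varphi_n-\varphi_m$ are again smooth and symmetric, so applying the estimate just proved to them shows $\{\varphi_n\}$ is Cauchy in the weighted space $L^2(\Omega;|x|^{-2}dx)$; the same mechanism that proves the inequality therefore supplies the compactness needed to pass to the limit. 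Identifying the weighted-$L^2$ limit with $u$ (using local convergence on regions bounded away from the origin, where the weight is comparable to $1$) and invoking Fatou's lemma then gives $\int_\Omega |u|^2/|x|^2\,dx\le\liminf_n\int_\Omega|\varphi_n|^2/|x|^2\,dx\le\liminf_n\|\nabla\varphi_n\|_2^2=\|\nabla u\|_2^2$. The main obstacle is precisely this last point: because in two dimensions $\dot H_0^1$ gives a priori no control on $u$ at infinity, one cannot assume the left-hand side is finite beforehand, so the argument must be organized so that the inequality for the dense class both produces the finiteness and, through Fatou, delivers the bound in the correct direction.
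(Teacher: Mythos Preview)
The paper does not give its own proof of this lemma; it is quoted from Galdi \cite[Lemma~3.1]{G2}. Your argument is sound and follows what is in fact the standard route to this inequality: the oddness in \eqref{symmetry of function3} forces $\int_0^{2\pi}u_i(r,\theta)\,d\theta=0$ on every circle, Wirtinger's inequality on $S^1$ then yields the weighted bound with constant $1$, and a symmetrized density argument pushes it from $C_0^\infty(\Omega)$ to $\dot H_0^1(\Omega)$.

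Two small imprecisions are worth flagging, though neither is a genuine gap. First, the parenthetical ``$u_i$ vanishes near the origin'' is not guaranteed when $0\in\Omega$; nothing in the definition of a symmetric exterior domain forces the obstacle to contain the origin. What actually makes the polar integrals finite is that the oddness forces $u_i(0)=0$, so for smooth $u_i$ one has $|u_i(x)|=O(|x|)$ near $0$ and $|u_i|^2/|x|^2$ is locally bounded. Second, the odd-part projection does not literally commute with $\nabla$ (reflection in one variable flips the sign of one partial derivative); what holds, and what you need, is that it is a contraction for the seminorm $\|\nabla\cdot\|_2$, since $|\nabla(Pf)(x)|\le\tfrac12\bigl(|\nabla f(x)|+|\nabla f(Rx)|\bigr)$ with $R$ the relevant reflection. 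With these adjustments the approximation and Fatou step go through exactly as you describe.
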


\begin{remark}
If $\Omega$ and $u$ are not symmetric, then there holds
\begin{equation}
\label{remark3}
  \int_{\Omega} \frac{|u(x)|^2}{|x|^2 (1+ | \log |x| | )^2} \,dx \le C \| \nabla u \|_2^2.
\end{equation}
\end{remark}

With the aid of this lemma, we can take $u$ and $v$ as test functions. We also prove that the weak solution $v$ satisfies the energy equality. 

\begin{lemma}
\label{weak}
Let $\Omega$ be a symmetric exterior domain with Lipschitz boundary. Suppose $u,v \in \dot{H}_{0,\sigma}^{1,S} (\Omega)$ are symmetric weak solutions of \eqref{eq:NS} with $\sup_{x \in \Omega} (|x|+1)|v(x)| < \infty$. Then we have
\begin{equation}
\label{eq:u weak2}
  (\nabla u, \nabla v) + (u \cdot \nabla u,v) = (f,v),
\end{equation}
\begin{equation}
\label{eq:v weak2}
  (\nabla v, \nabla u) - (v \cdot \nabla u,v) = (f,u).
\end{equation}
In addition, $v$ satisfies the energy equality
\begin{equation}
 \label{eq:EE}
 \| \nabla v \|_2^2 =(f,v).
\end{equation}
\end{lemma}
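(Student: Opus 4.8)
The plan is to establish \eqref{eq:u weak2} and \eqref{eq:v weak2} by inserting carefully chosen approximating sequences of solenoidal test functions into the weak forms \eqref{eq:u weak} and \eqref{eq:v weak} and passing to the limit, and then to deduce the energy equality \eqref{eq:EE} by specialising these two identities to the case $u=v$. To prove \eqref{eq:u weak2}, note first that $v \in \dot{H}_{0,\sigma}^{1} (\Omega)$ with $\sup_{x \in \Omega}(|x|+1)|v(x)| < \infty$, so Proposition \ref{exterior} furnishes a sequence $\{ v_n \}_{n=1}^{\infty} \subset C_{0,\sigma}^{\infty} (\Omega)$ with $\nabla v_n \to \nabla v$ in $L^2 (\Omega)$ and $(|x|+1)v_n \to (|x|+1)v$ weakly $*$ in $L^{\infty} (\Omega)$. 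Taking $\varphi = v_n$ in \eqref{eq:u weak} and letting $n \to \infty$, the term $(\nabla u, \nabla v_n)$ tends to $(\nabla u, \nabla v)$ because $\nabla u \in L^2 (\Omega)$, and $(f,v_n) \to (f,v)$ because $f \in \dot{H}_{0,\sigma}^{-1} (\Omega)$ and $\| \nabla (v_n - v) \|_2 \to 0$. For the nonlinear term I would write
\begin{equation*}
  (u \cdot \nabla u, v_n) = \int_{\Omega} \frac{(u \cdot \nabla u)(x)}{|x|+1} \cdot (|x|+1)v_n (x) \,dx ,
\end{equation*}
and observe that, since $u$ has the symmetry property \eqref{symmetry of function3}, the Cauchy--Schwarz inequality, the bound $1/(|x|+1) \le 1/|x|$ and the Hardy inequality of Lemma \ref{Hardy} give
\begin{equation*}
  \int_{\Omega} \frac{|u|\,|\nabla u|}{|x|+1} \,dx \le \Bigl( \int_{\Omega} \frac{|u|^2}{|x|^2} \,dx \Bigr)^{1/2} \| \nabla u \|_2 \le C \| \nabla u \|_2^2 < \infty ,
\end{equation*}
so that $(u \cdot \nabla u)/(|x|+1) \in L^1 (\Omega)$. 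Since $(|x|+1)v \in L^{\infty} (\Omega)$ and $L^{\infty} (\Omega)$ is the dual of $L^1 (\Omega)$, the weak-$*$ convergence yields $(u \cdot \nabla u, v_n) \to (u \cdot \nabla u, v)$, and \eqref{eq:u weak2} follows.

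To prove \eqref{eq:v weak2}, I would instead approximate $u$ by a sequence $\{ u_n \}_{n=1}^{\infty} \subset C_{0,\sigma}^{\infty} (\Omega)$ with $\nabla u_n \to \nabla u$ in $L^2 (\Omega)$, which is available directly from the definition of $\dot{H}_{0,\sigma}^{1} (\Omega)$. Taking $\widetilde{\varphi} = u_n$ in \eqref{eq:v weak}, I would deal with the nonlinear term by integration by parts: since $\text{div }v = 0$ in $\Omega$ and $u_n$ has compact support in $\Omega$, one has $(v \cdot \nabla v, u_n) = -(v \cdot \nabla u_n, v)$. Here the decay of $v$ enters only in a mild way: writing $M := \sup_{x \in \Omega}(|x|+1)|v(x)|$, the pointwise bound $|v(x)| \le M(|x|+1)^{-1}$ gives $v \in L^q (\Omega)$ for every $q \in (2,\infty]$, so in particular $v \otimes v \in L^2 (\Omega)$, whence $(v \cdot \nabla u_n, v) \to (v \cdot \nabla u, v)$ as $n \to \infty$ thanks to $\nabla u_n \to \nabla u$ in $L^2 (\Omega)$; the two remaining terms converge as before, and \eqref{eq:v weak2} follows.

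Finally, since $v$ itself satisfies $\sup_{x \in \Omega}(|x|+1)|v(x)| < \infty$, the identities \eqref{eq:u weak2} and \eqref{eq:v weak2} remain valid with $u$ replaced by $v$; they then read $\| \nabla v \|_2^2 + (v \cdot \nabla v, v) = (f,v)$ and $\| \nabla v \|_2^2 - (v \cdot \nabla v, v) = (f,v)$, and adding these gives $2\| \nabla v \|_2^2 = 2(f,v)$, which is \eqref{eq:EE}. I expect the delicate point to be the passage to the limit in the nonlinear term of \eqref{eq:u weak2}: a priori nothing is known about the integrability of $u \cdot \nabla u$, so $v$ cannot be used as a test function in \eqref{eq:u weak} directly. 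It is precisely the combination of Proposition \ref{exterior} --- which yields an approximating sequence converging not only in $\dot{H}_{0,\sigma}^{1} (\Omega)$ but also, after multiplication by $|x|+1$, weakly $*$ in $L^{\infty} (\Omega)$ --- with the symmetric Hardy inequality of Lemma \ref{Hardy}, guaranteeing $(u \cdot \nabla u)/(|x|+1) \in L^1 (\Omega)$, that makes the limiting argument work; the symmetry assumption \eqref{symmetry of function3} is indispensable here.
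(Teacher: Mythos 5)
Your proofs of \eqref{eq:u weak2} and \eqref{eq:v weak2} are correct and coincide with the paper's: Proposition \ref{exterior} supplies the test functions $v_n$, Lemma \ref{Hardy} gives $(u\cdot\nabla u)/(|x|+1)\in L^1(\Omega)$ so that the weak-$*$ convergence of $(|x|+1)v_n$ handles the nonlinear term, and for \eqref{eq:v weak2} the integration by parts together with $v\in L^4(\Omega)$ (from the decay assumption) lets you pass to the limit along $u_n\to u$ in $\dot H^1_{0,\sigma}(\Omega)$. Where you genuinely diverge is the energy equality: the paper proves \eqref{eq:EE} by invoking the Kozono--Sohr density of $C^\infty_{0,\sigma}(\Omega)$ in $\dot H^1_{0,\sigma}(\Omega)\cap L^4(\Omega)$ \cite{KS} to show $(v\cdot\nabla v,v)=0$ directly and then tests \eqref{eq:v weak} with that sequence, whereas you simply specialise \eqref{eq:u weak2} and \eqref{eq:v weak2} to the pair $(v,v)$ and add. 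Your route is legitimate under the lemma's hypotheses (since $v\in\dot H^{1,S}_{0,\sigma}(\Omega)$, the Hardy inequality applies to $v$ in the role of $u$) and is more economical, avoiding the extra density theorem; the price is that it uses the symmetry of $v$, while the paper's argument for \eqref{eq:EE} does not --- which is exactly what allows the paper's subsequent remark that the lemma holds without assuming $v$ symmetric. Both arguments are sound.
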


\begin{proof}
According to Proposition \ref{exterior}, there exists a sequence $\{ v_n \}_{n=1}^\infty \subset C_{0,\sigma}^\infty (\Omega)$ such that $\nabla v_n \rightarrow \nabla v$ in $L^2 (\Omega)$ and $(|x|+1)v_n \rightarrow (|x|+1)v$ weakly $*$ in $L^\infty (\Omega)$ as $n \rightarrow \infty$. We substitute $v_n$ for $\varphi$ in \eqref{eq:u weak} to obtain
\begin{equation}
\label{eq:u weak3}
  (\nabla u, \nabla v_n) + (u \cdot \nabla u,v_n) = (f,v_n),
\end{equation}
and we write
\begin{equation*}
 (u \cdot \nabla u,v_n) = \left(\frac{u}{|x|+1} \cdot \nabla u,(|x|+1)v_n \right) .
\end{equation*}
By Lemma \ref{Hardy} we see that
\begin{equation*}
 \left\| \frac{u}{|x|+1} \cdot \nabla u \right\|_1 \le \left\| \frac{u}{|x|+1} \right\|_2 \| \nabla u \|_2 \le C \| \nabla u \|_2^2,
\end{equation*}
which together with the property of $v_n$ yields
\begin{equation*}
  (u \cdot \nabla u,v_n) \rightarrow (u \cdot \nabla u,v) \quad \text{as } \ n \rightarrow \infty.
\end{equation*}
Hence we derive \eqref{eq:u weak2} by letting $n \rightarrow \infty$ in \eqref{eq:u weak3}. On the other hand, $v \in L^4 (\Omega)$ in particular and by the class of $u$ we can take a sequence $\{ u_n \}_{n=1}^\infty \subset C_{0,\sigma}^\infty (\Omega)$ such that $\nabla u_n \rightarrow \nabla u$ in $L^2 (\Omega)$ as $n \rightarrow \infty$. We  insert $u_n$ into $\widetilde{\varphi}$ in \eqref{eq:v weak} and integrate the second term by parts to get
\begin{equation*}
  (\nabla v, \nabla u_n) - (v \cdot \nabla u_n ,v) = (f,u_n).
\end{equation*}
Since
\begin{equation*}
 |(v \cdot \nabla u_n, v)| \le \| v \|_4^2 \| \nabla u_n \|_2 ,
\end{equation*}
we obtain \eqref{eq:v weak2} by passing to the limit $n \rightarrow \infty$.

Next, we show the energy equality. Since $v \in \dot{H}_{0,\sigma}^1 (\Omega) \cap L^4 (\Omega)$ and $C_{0,\sigma}^\infty (\Omega)$ is dense in $\dot{H}_{0,\sigma}^1 (\Omega) \cap L^4 (\Omega)$ (\cite[Theorem 2]{KS}), there exists a sequence $\{ \tilde{v}_n \}_{n=1}^\infty \subset C_{0,\sigma}^\infty (\Omega)$ such that $\nabla \tilde{v}_n \rightarrow \nabla v$ in $L^2 (\Omega)$ and $\tilde{v}_n \rightarrow v$ in $L^4 (\Omega)$ as $n \rightarrow \infty$. An integration by parts gives
\begin{equation*}
 (v \cdot \nabla v,\tilde{v}_n) = - (v \cdot \nabla \tilde{v}_n ,v).
\end{equation*}
By the estimates
\begin{equation*}
 |(v \cdot \nabla v,\tilde{v}_n)| \le \| v \|_4 \| \nabla v \|_2 \| \tilde{v}_n \|_4 , \quad |(v \cdot \nabla \tilde{v}_n,v)| \le \| v \|_4^2 \| \nabla \tilde{v}_n \|_2 ,
\end{equation*}
we deduce
\begin{equation*}
 (v \cdot \nabla v,\tilde{v}_n) \rightarrow (v \cdot \nabla v,v), \quad -(v \cdot \nabla \tilde{v}_n,v) \rightarrow -(v \cdot \nabla v,v)
\end{equation*}
as $n \rightarrow \infty$. Therefore
\begin{equation*}
 (v \cdot \nabla v,v) =0.
\end{equation*}
Taking $\tilde{v}_n$ as a test function in \eqref{eq:v weak} and then letting $n \rightarrow \infty$, we derive the energy equality \eqref{eq:EE}.
\end{proof}

\begin{remark}
As we can see in the proof, we can prove this lemma without the symmetry of $v$. We need the symmetry property of $v$ to apply Lemma \ref{Hardy} in the proof of Theorem \ref{theorem} below.
\end{remark}

\begin{remark}
\label{remark3}
If $\sup_{x \in \Omega} (|x|+1)^\alpha |v(x)| < \infty$ with $\alpha >1$, we can prove \eqref{eq:u weak2}, \eqref{eq:v weak2} and \eqref{eq:EE} without assuming any symmetry. With the aid of Remark \ref{alpha}, we use the inequality \eqref{remark3}, instead of Lemma \ref{Hardy}, to take $v$ as a test function in \eqref{eq:u weak}. The similar argument to the proof of Theorem \ref{theorem} below yields Remark \ref{remark2}.
\end{remark}

Following the argument due to Miyakawa \cite{Mi}, we give the proof of Theorem \ref{theorem}.

\begin{proof}[Proof of Theorem \ref{theorem}]
Put $w:=u-v$. We first show that
\begin{equation}
\label{w}
 (w \cdot \nabla v,v)=0.
\end{equation}
We apply Proposition \ref{exterior} to take a sequence $\{ v_n \}_{n=1}^\infty \subset C_{0,\sigma}^\infty (\Omega)$ such that $\nabla v_n \rightarrow \nabla v$ in $L^2 (\Omega)$ and $(|x|+1)v_n \rightarrow (|x|+1)v$ weakly $*$ in $L^\infty (\Omega)$ as $n \rightarrow \infty$. By an integration by parts, we have
\begin{equation}
\label{weak of w}
 (w \cdot \nabla v,v_n) = - (w \cdot \nabla v_n ,v).
\end{equation}
Since $w$ satisfies the symmetry property \eqref{symmetry of function3}, the same calculation as the proof of \eqref{eq:u weak2} yields $(w \cdot \nabla v,v_n) \rightarrow (w \cdot \nabla v,v)$ as $n \rightarrow \infty$. On the other hand, by Lemma \ref{Hardy} we see
\begin{align*}
  |(w \cdot \nabla v_n ,v)| &= \left| \left(\frac{w}{|x|+1} \cdot \nabla v_n, (|x|+1)v \right) \right| \\
 &\le \sup_{x \in \Omega} (|x|+1)|v(x)| \left\| \frac{w}{|x|+1} \cdot \nabla v_n \right\|_1 \\
 &\le C \sup_{x \in \Omega} (|x|+1)|v(x)| \| \nabla w \|_2 \| \nabla v_n \|_2 ,
\end{align*}
which implies $-(w \cdot \nabla v_n, v) \rightarrow -(w \cdot \nabla v,v)$ as $n \rightarrow \infty$. Hence passing to the limit $n \rightarrow \infty$ in \eqref{weak of w}, we obtain \eqref{w}.

According to Lemma \ref{weak}, we have
\begin{equation}
\label{eq:u weak 6}
  (\nabla u, \nabla v) = -(u \cdot \nabla u,v) + (f,v)
\end{equation}
and
\begin{equation}
\label{eq:v weak 6}
  (\nabla v, \nabla u)  = (v \cdot \nabla u,v)+ (f,u).
\end{equation}
It follows from \eqref{w}, \eqref{eq:u weak 6} and \eqref{eq:v weak 6} that
\begin{equation*}
  2(\nabla u, \nabla v) = - (w \cdot \nabla w,v) + (f,u) + (f,v).
\end{equation*}
Thus the energy inequality $\| \nabla u \|_2^2 \le (f,u)$, the energy equality \eqref{eq:EE} and Lemma \ref{Hardy} lead us to
\begin{align*}
  \| \nabla w \|_2^2 &= \| \nabla u \|_2^2 + \| \nabla v \|_2^2 - 2(\nabla u, \nabla v) \\
                              &\le (w \cdot \nabla w,v) \\
                              &\le \sup_{x \in \Omega} (|x|+1)|v(x)| \left\| \frac{w}{|x|+1} \cdot \nabla w \right\|_1  \\
                              &\le C \delta \| \nabla w \|_2^2 ,
\end{align*}
where $C=C(\Omega)$ is the constant in Lemma \ref{Hardy}. Now we take the constant $\delta$ so that
\begin{equation*}
 0< \delta < \frac{1}{C}.
\end{equation*}
Then we derive
\begin{equation*}
 \| \nabla w \|_2 = 0.
\end{equation*}
Consequently, $w$ is a constant in $\Omega$, and by the boundary condition we conclude $w=0$ in $\Omega$. This completes the proof of Theorem \ref{theorem}.
\end{proof}
\subsection*{Acknowledgment}
This work was partly supported by Grant-in-Aid for JSPS Fellows Number 25002702. The author would like to thank Professor R. Farwig and Professor M. Yamazaki for useful comments.

\end{document}